
\NeedsTeXFormat{LaTeX2e}

\documentclass[12pt]{amsart}

\usepackage{amsmath,amssymb,latexsym,fullpage,amsfonts,mathrsfs,xfrac,appendix,enumitem,mathabx,epsfig}

\usepackage[alphabetic,nobysame]{amsrefs}

\usepackage[all, cmtip]{xy}

\newtheorem{Theorem}{Theorem}[section] 
\newtheorem{Lemma}[Theorem]{Lemma}     
\newtheorem{Corollary}[Theorem]{Corollary}
\newtheorem{Proposition}[Theorem]{Proposition}

		\newtheorem{Definition}[Theorem]{Definition}
		\newtheorem{Remark}[Theorem]{Remark}
		
		\numberwithin{equation}{section}
		\numberwithin{Theorem}{section}
		\newcommand{\reff}{\eqref}

\newcommand{\defi}[1]{\textsf{#1}} 
\DeclareFontEncoding{OT2}{}{} 
\newcommand{\textcyr}[1]{%
 {\fontencoding{OT2}\fontfamily{wncyr}\fontseries{m}\fontshape{n}\selectfont #1}}
\newcommand{\Sha}{{\mbox{\textcyr{Sh}}}}


\newcommand{\Z}{{\mathbb Z}}
\newcommand{\Q}{{\mathbb Q}}
\newcommand{\R}{{\mathbb R}}

\newcommand{\F}{{\mathbb F}}

\newcommand{\PP}{{\mathbb P}}
\newcommand{\G}{{\mathbb G}}
\newcommand{\M}{{\mathbb{G}^\times}}

\newcommand{\kbar}{{\overline{k}}}
\newcommand{\Cbar}{{\overline{C}}}

\newcommand{\Mbar}{{\overline{M}}}
\newcommand{\Xbar}{{\overline{X}}}
\newcommand{\Ybar}{{\overline{Y}}}

\newcommand{\calA}{{\mathcal A}}

\newcommand{\calD}{{\mathcal D}}

\newcommand{\calJ}{{\mathcal J}}

\newcommand{\calS}{{\mathcal S}}
\newcommand{\calT}{{\mathcal T}}



\newcommand{\fm}{{\mathfrak m}}

\newcommand{\To}{\longrightarrow}

\DeclareMathOperator{\Sel}{Sel}
\DeclareMathOperator{\Cov}{Cov}

\DeclareMathOperator{\disc}{disc}
\DeclareMathOperator{\Tr}{Tr}

\DeclareMathOperator{\divv}{div}
\DeclareMathOperator{\ord}{ord}
\DeclareMathOperator{\coker}{coker}

\DeclareMathOperator{\Gal}{Gal}

\DeclareMathOperator{\Res}{Res}
\DeclareMathOperator{\Br}{Br}

\DeclareMathOperator{\Div}{Div}

\DeclareMathOperator{\Pic}{Pic}

\DeclareMathOperator{\Jac}{Jac}
\DeclareMathOperator{\HH}{H}

\DeclareMathOperator{\Spec}{Spec}
\DeclareMathOperator{\Aut}{Aut}

\DeclareMathOperator{\SL}{SL}

\DeclareMathOperator{\res}{\operatorname{res}}


\title[Generalized Jacobians and explicit descents]
 {Generalized Jacobians and explicit descents} 

\author{Brendan Creutz}

\address{School of Mathematics and Statistics, University of Canterbury, Private Bag 4800, Christchurch 8140, New Zealand}
\email{brendan.creutz@canterbury.ac.nz}

\begin{document}
\maketitle

\begin{abstract}
We develop a cohomological description of explicit descents in terms of generalized Jacobians, generalizing the known description for hyperelliptic curves. Specifically, given an integer $n$ dividing the degree of some reduced, effective and base point free divisor $\frak{m}$ on a curve $C$, we show that multiplication by $n$ on the generalized Jacobian $J_\frak{m}$ factors through an isogeny $\varphi:A_\frak{m} \to J_\frak{m}$ whose kernel is dual to the Galois module of divisor classes $D$ such that $nD$ is linearly equivalent to some multiple of $\frak{m}$. By geometric class field theory, this corresponds to an abelian covering of $C_\kbar := C \times_{\Spec{k}} \Spec(\kbar)$ of exponent $n$ unramified outside $\frak{m}$. We show that the $n$-coverings of $C$ parameterized by explicit descents are the maximal unramified subcoverings of the $k$-forms of this ramified covering. We present applications to the computation of Mordell-Weil ranks of nonhyperelliptic curves.
\end{abstract}

\section{Introduction}

	Suppose $f(x,y)$ is a binary form of degree $d$ over a field $k$ of characteristic not equal to $2$. Pencils of quadrics with discriminant form $f(x,y)$ have been studied in \cite{BSD,Cassels,Cremona,BG, Wang, BGW_AIT2, BGW}. When $d$ is even, the $\SL_d(k)/\mu_2$-orbits of pairs $(A,B)$ with discriminant form $f(x,y)$ correspond to a collection of $2$-coverings of the hyperelliptic curve $C:z^2 = f(x,y)$. When $k = \Q$ these coverings are used in \cite{Bhargava} and \cite{BGW} to compute the average size of the $2$-Selmer set of $C$, and of the torsor $J^1$ parameterizing divisor classes of degree $1$, respectively, from which they deduce the fantastic result that most hyperelliptic curves over $\Q$ have no rational points.
	
	The same collection of coverings can also be described in terms of the $k$-algebra $L := k[x]/f(x,1)$. This description was used in \cite{BruinStoll} and \cite{CreutzANTSX} to compute $2$-Selmer sets of $C$ and $J^1$, respectively, for individual hyperelliptic curves. A key step in both \cite{CreutzANTSX} and \cite{BGW} is to check that this collection of coverings is large enough to contain the locally soluble $2$-coverings (under suitable hypotheses on $C$). In \cite{BGW} this is achieved by identifying these coverings as the unramified subcoverings of $k$-forms of the maximal abelian covering of exponent $2$ unramified outside the pair of points at infinity on the affine model $z^2=f(x,y)$, a characterization that is quite natural in light of the use of generalized Jacobians in \cite{PoonenSchaefer}.
	
	Meanwhile the theory of explicit descents has expanded to incorporate computable descriptions of certain approximations to Selmer sets, called fake Selmer sets, for all curves. This is developed for nonhyperelliptic curves of genus at least $2$ in \cite{BPS} and for curves of genus $1$ in \cite{CreutzMathComp}. In this paper we provide geometric and cohomological descriptions of these descents in terms of generalized Jacobians, generalizing the description for hyperelliptic curves given in \cite{PoonenSchaefer,BGW}. Specifically, given an integer $n$ dividing the degree of some reduced effective and base point free divisor $\frak{m}$ on a curve $C$, we show that multiplication by $n$ on the generalized Jacobian $J_\frak{m}$ factors through an isogeny of semiabelian varieties $\varphi:A_\frak{m} \to J_\frak{m}$ whose kernel is naturally the dual of the Galois module of classes of divisors $D$ on $C_\kbar := C \times_{\Spec{k}} \Spec(\kbar)$ such that $nD$ is linearly equivalent to a multiple of $\frak{m}$. By geometric class field theory, this corresponds to an abelian covering of exponent $n$ and conductor $\frak{m}$. We show that the fake descents mentioned above have a natural interpretation in terms of the $k$-forms of this ramified covering, which we call $\varphi$-coverings. The main result in this direction is Theorem~\ref{thm:descentmaps}, from which we deduce Corollaries~\ref{cor:alphais} and~\ref{cor:alphaC} giving an interpretation of the descents on $C$ and $J^1$ in terms of those $n$-coverings which arise as maximal unramified subcoverings of the $k$-forms of this ramified covering.
	
	This description unifies the methods of explicit descent described in \cite{MSS,BruinStoll,CreutzANTSX,CreutzMathComp,BPS} and allows a more
natural interpretation of some of the objects that arise. Moreover, it yields a number of applications to explicit descent and the arithmetic of curves described in the following subsections.

\subsubsection{Applications to explicit descent on $J$}

	The fake descent presented in \cite{BPS} proceeds by substituting the connecting homomorphism $d : J(k) \to \HH^1(k,J[n])$ in the Kummer sequence with a more computationally amenable homomorphism $f : \Pic^0(C) \to L^\times/k^\times L^{\times n}$, for some \'etale $k$-algebra $L$. Here $\Pic(C)$ denotes the group of $k$-rational divisors on $C$ modulo linear equivalence and $\Pic^0(C)$ denotes the subgroup of classes of degree $0$. In order to obtain information about the Selmer group from this, they require some hypothesis (e.g., \cite[Hypothesis 10.1]{BPS}) to ensure that $\Pic^0(C) = J(k)$ globally and locally. In general one has an injective map $\Pic^0(C) \to \Pic^0(\Cbar)^{\Gal{k}} = J(k)$ which need not be surjective. We show how such hypotheses can be omitted in a number of relevant cases (cf. Theorem~\ref{thm:descentJ}). In Theorem~\ref{thm:descentJexample} we use this to determine the Mordell-Weil rank of a Jacobian $J$ of a plane quartic curve $C$ for which $\Pic^0(C) \ne J(k)$.

\subsubsection{Application to explicit descent on $C$ and $J^1$}

	In \cite{BPS} a `fake Selmer set' of a nonhyperelliptic curve $C$ over a global field is introduced. Using the machinery of \cite{BPS} we introduce a fake Selmer set of the torsor $J^1$ parameterizing divisor classes of degree $1$ (see Definition~\ref{def:selfaked}). It is easy to see that $C$ and $J^1$ cannot have any rational points if the corresponding fake Selmer set is empty. Our interpretation in terms of generalized Jacobians allows us to verify that the obstruction coming from these fake descents is indeed a finite abelian descent obstruction in the sense of~\cite[Section 5.3]{Skorobogatov} and, consequently, that such counterexamples to the Hasse principle are explained by the Brauer-Manin obstruction (cf. \cite[Theorem 6.1.2]{Skorobogatov}). This is given in Theorem~\ref{thm:selfakeC} and~\ref{thm:selfakeJ1} below. 
	
	Particularly in the case of $J^1$, this allows us to obtain deeper knowledge than would otherwise be obtained from simply knowing that $J^1(k)$ is empty. Indeed, we are able to tap into results in arithmetic duality which would otherwise only be possible conditionally on deep conjectures concerning finiteness of the Tate-Shafarevich group $\Sha(J)$. In Section~\ref{sec:Examples} we give an example of a nonhyperelliptic genus $3$ curve over $\Q$ with absolutely simple Jacobian $J$ for which the fake $2$-Selmer set of $J^1$ is empty. Theorem~\ref{thm:selfakeJ1} is then used to prove that $\Sha(J)[2^\infty] \simeq \Z/2\Z\times \Z/2\Z$, and consequently to determine that the Mordell-Weil rank is $1$. Without making use of Theorem \ref{thm:selfakeJ1} we would only obtain the weaker conclusion that $1 \le \operatorname{rank}(J(\Q)) \le 2$ and $1 \le \dim_{\F_2}\Sha(J)[2] \le 2$.
	
\subsubsection{Applications to descent on genus $1$ curves} The results of \cite{CreutzMathComp} describe $n$-descents on genus $1$ curves of degree $n$ when $n$ is prime. The results just mentioned extend aspects of this to general $n$. Namely, for such a curve we have a computable fake Selmer set whose emptiness implies that the curve is not divisible by $n$ in the Tate-Shafarevich group of its Jacobian. This is potentially practical in the case $n = 4$, enabling $16$-descent on elliptic curves.

\subsubsection{Application to Galois descent of unramified abelian coverings of exponent $2$}

	The results of this paper are used in \cite{CreutzPAMS} to prove that if $C$ is an everywhere locally solvable curve of genus $g \ge 2$ over a global field of characteristic different from $2$ and that the Galois action on $J[2]$ is generic (i.e., $\Gal(k(J[2])/k)$ is isomorphic to $S_{2g+2}$ or $\operatorname{GSp}_{2g}(\F_2)$ correspondingly as $C$ is or is not hyperelliptic), then the maximal unramified abelian covering of $C_\kbar$ of exponent $2$ descends to $k$.
	
	The obstruction to Galois descent for the $\varphi$-covering mentioned above and its maximal unramified subcovering are elements of the Galois cohomology groups $\HH^2(k,A_\frak{m}[\varphi])$ and $\HH^2(k,J[2])$, respectively. The proof proceeds by showing that, generically, the locally trivial subgroup $\Sha^2(k,A_\frak{m}[\varphi])$ is trivial, which implies that the ramified covering and, hence also, its unramified subcovering descend to $k$. The use of $\varphi$-coverings here seems unavoidable (and the result all the more surprising) given that the group $\Sha^2(k,J[2])$ can be nontrivial even when the Galois action on $J[2]$ is generic. In fact, this occurs whenever $C$ has no rational theta characterstics and all of the decomposition groups of $\Gal(k(J[2])/k)$ are cyclic, since in this case the torsor parameterizing theta characteristics gives a nontrivial element of $\Sha^1(k,J[2])$ (see \cite{Atiyah} and \cite[Remark 3.18]{PoonenRains}) and $\Sha^2(k,J[2]) \simeq \Sha^1(k,J[2])$ by Tate's duality theorem. Moreover, there are examples of locally solvable curves of genus $\ge 2$ for which the maximal unramified abelian covering of exponent $2$ does not descend to $k$ (see \cite[Theorem 6.7]{CreutzViray}).

\subsubsection{Potential application to average sizes of Selmer sets} We expect our interpretation may also be of relevance to future efforts to compute these Selmer sets {\em on average}. Namely, it should be possible to identify the collection $\Cov_\frak{m}^n(J^1)$ with the orbits in some coregular representation (as is done in \cite{BGW} for the hyperelliptic case). The results in Theorems~\ref{thm:solublecoverings} and~\ref{thm:phicov} would then have implications for the structure of the space of orbits. Thorne has recently made progress understanding the situation for nonhyperelliptic genus $3$ curves with a marked rational point \cite{Thorne1,Thorne2}. It is our hope that the results of this paper may shed light on the corresponding situation when there are no rational points.
		
		\subsection{Notation}
				
			Throughout this paper $n$ is an integer and $k$ is a field of characteristic not divisible by $n$, with separable closure $\kbar$ and absolute Galois group $\Gal_k = \Gal(\kbar/k)$. We will use $C$ to denote a \defi{nice curve} over $k$, i.e. a smooth, projective and geometrically integral $k$-variety of dimension $1$. For a nonempty finite \'etale $k$-scheme $\Delta = \Spec(L)$ we use $\Res_\Delta = \Res_{L/k}$ to denote the restriction of scalars functor taking $L$-schemes to $k$-schemes. For a commutative \'etale $k$-group scheme $G$, we use $\HH^i(G)$ to denote the Galois cohomolgy group $\HH^i(\Gal_k,G(\kbar))$. For $k$ a global field, equivalence classes of absolute values on $k$ (whether archimedean or not) will be referred to as primes.

		\subsection*{Acknowledgements}
			I would like to thank: Michael Stoll and Bjorn Poonen for comments and discussions concerning the material in this article and Nils Bruin for providing me with Magma code for a number of the algorithms described in \cite{BPS}. In developing the algorithms and examples in Section~\ref{sec:examples} I have made use of a list quartic curves of small discriminant provided by Denis Simon as well as the database  \cite{Sutherland} developed by Andrew Sutherland. Computations were performed using the Magma Computer Algebra System described in \cite{magma}.
			\section{The modulus setup}\label{sec:modulus}

				\begin{Definition}
					Let $C$ be a nice curve over $k$. A \defi{modulus setup} for $C$ is a pair $(n,\frak{m})$ consisting of a positive integer $n$ not divisible by the characteristic of $k$, and a reduced, effective and base point free divisor $\frak{m} \in \Div(C)$ of degree $m$, with $n$ dividing $m$.
				\end{Definition}
				
				Given a modulus setup $(n,\frak{m})$ we define $\ell := \deg(\frak{m})/n$. We are primarily interested in the following examples.
					
					\begin{enumerate}[label=\textbf{M.\arabic*},ref=Example M.\arabic*]
						\item\label{ex:M_doublecover} Suppose $\pi:C \to \PP^1$ is a double cover which is not ramified over $\infty \in\PP^1$. Let $n = 2$ and $\frak{m} = \pi^*\infty$. 
						\item\label{ex:M_g1} Suppose $C$ is a genus $1$ curve of degree $m$ in $\PP^{m-1}$. Take $\frak{m}$ to be any reduced hyperplane section and take $n = m$.
						\item\label{ex:M_canonical} Suppose $C$ is any nice nonhyperelliptic curve of genus at least $3$, $n = 2$ and $\frak{m}$ is a canonical divisor. Then $m = 2g-2$ and $\ell = g-1$.
					\end{enumerate}
				  
				\subsection{The generalized Jacobian associated to a modulus setup}
					Let $C$ be a nice curve over $k$ with a modulus setup $(n,\frak{m})$. We may view $\frak{m}$ as a finite \'etale subscheme $\frak{m} = \Spec M \subset C$, or as a \defi{modulus} in the sense of geometric class field theory (see \cite{SerreAGCF}). Let $C_\frak{m}$ denote the singular curve associated to $\frak{m}$ as in \cite[IV.4]{SerreAGCF}. Let $\Pic_C$ and $\Pic_{C_\frak{m}}$ be the commutative group schemes over $k$ representing the Picard functors of $C$ and $C_\frak{m}$. There is an exact sequence of commutative group schemes over $k$,
				\begin{equation}\label{eq:PicCmseq}
					1 \to T \to \Pic_{C_\frak{m}} \to \Pic_C \to 0\,,
				\end{equation}
				where $T$ is an algebraic torus. The restriction of \eqref{eq:PicCmseq} to the identity components is an exact sequence of semiabelian varieties,
				\begin{equation}
					1 \to T \to J_\frak{m} \to J \to 0\,,
				\end{equation}
				where $J_\frak{m}$ is the generalized Jacobian of $C$ associated to the modulus $\frak{m}$ and $J$ is the usual Jacobian of $C$.
				
				Let $\M$ denote the multiplicative group scheme.\footnote{In conjunction with our use of $\frak{m}$ for the modulus and $m$ for its degree, the standard notation $\G_m$ for the multiplicative group might lead to confusion.}

				\begin{Lemma}\label{lem:Tn}
					$T\simeq \Res_\frak{m}\M/\M$ is isomorphic to the quotient of $\Res_\frak{m}\M$ by the diagonal embedding of $\M$, and there is an exact sequence of finite group schemes
					\[
						1 \To \frac{\Res^1_\frak{m}\mu_n}{\mu_n} \To T[n] \To \mu_n \To 1 \,,
					\]
					where $\Res^1_\frak{m}\mu_n$ is the kernel of the norm map $N:\Res_\frak{m}\mu_n \to \mu_n$.
				\end{Lemma}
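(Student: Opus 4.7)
The plan is to establish the two assertions in order. For the identification $T = \Res_\frak{m}\G_m/\G_m$, I will invoke Serre's construction of the generalized Jacobian in \cite{SerreAGCF}*{IV.1--IV.3}. Since $\frak{m}$ is reduced, the conductor at each geometric point $P \in \frak{m}(\kbar)$ is maximal, so the local contribution to the torus part of $\Pic_{C_\frak{m}}$ is $\kbar(P)^\times \cong \G_{m,\kbar}$. Assembling these factors and descending along $\frak{m} = \Spec M$ identifies the product with $\Res_\frak{m}\G_m$. Quotienting by the diagonal $\G_m$ then records the fact that a rational function which is a nonzero constant at every point of $\frak{m}$ is congruent to $1$ modulo $\frak{m}$ up to scaling, hence trivializes in the quotient $\Pic_{C_\frak{m}}/\Pic_C$.

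For the $n$-torsion sequence, I would first apply the snake lemma to multiplication by $n$ on the short exact sequence
\[
1 \To \G_m \To \Res_\frak{m}\G_m \To T \To 1\,.
\]
Since $n$ is invertible in $k$, the $n$-th power map is fppf surjective on all three tori (on $\Res_\frak{m}\G_m$ this can be checked after base change to $\kbar$, where it becomes a product of $n$-th power maps on $\G_m$). Therefore the sequence of $n$-torsion is short exact, yielding the identification $T[n] \cong \Res_\frak{m}\mu_n/\mu_n$.

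Next I bring in the norm. By definition of $\Res^1_\frak{m}\mu_n$, there is an exact sequence
\[
1 \To \Res^1_\frak{m}\mu_n \To \Res_\frak{m}\mu_n \stackrel{N}{\To} \mu_n \To 1\,,
\]
where surjectivity again follows by base change to $\kbar$, where $N$ becomes the product map $\mu_n^m \to \mu_n$. The key observation is that the composite of the diagonal $\mu_n \hookrightarrow \Res_\frak{m}\mu_n$ with $N$ is the $m$-th power map on $\mu_n$, which vanishes because $n \mid m$. Hence the diagonal $\mu_n$ is contained in $\Res^1_\frak{m}\mu_n$, and dividing the three terms of the norm sequence by this diagonal gives
\[
1 \To \Res^1_\frak{m}\mu_n/\mu_n \To \Res_\frak{m}\mu_n/\mu_n \stackrel{\bar N}{\To} \mu_n \To 1\,,
\]
which, after substituting $T[n]$ for the middle term, is the desired sequence.

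The only genuine subtlety is ensuring the snake lemma outputs short (rather than four-term) exact sequences; this is what forces us to work in the fppf topology and relies on $n$ being coprime to $\operatorname{char} k$, which is part of the modulus setup. Beyond that, the proof is a careful bookkeeping of diagonal embeddings and norm maps, and I expect no further obstacle.
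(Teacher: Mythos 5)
Your proof is correct, but it runs along a slightly different track from the paper's. The paper presents $T$ via the norm-one torus: the inclusion $\Res^1_\frak{m}\G_m \to \Res_\frak{m}\G_m$ induces a surjection onto $\Res_\frak{m}\G_m/\G_m$ with kernel $\mu_m$, and the paper applies the snake lemma to multiplication by $n$ on $1 \to \mu_m \to \Res^1_\frak{m}\G_m \to T \to 1$, so the surjection $T[n]\to\mu_n$ arises as a connecting homomorphism (lift to $\Res^1_\frak{m}\G_m$, take the $n$-th power landing in $\mu_m$, then raise to the power $m/n$), with its identification as the norm-induced map left implicit. You instead take $n$-torsion in the defining sequence $1 \to \G_m \to \Res_\frak{m}\G_m \to T \to 1$, using surjectivity of the $n$-th power map on tori (fppf, or indeed \'etale, since $n$ is invertible in $k$) to get $T[n]\cong \Res_\frak{m}\mu_n/\mu_n$, and then quotient the norm sequence $1 \to \Res^1_\frak{m}\mu_n \to \Res_\frak{m}\mu_n \to \mu_n \to 1$ by the diagonal $\mu_n$, observing that $n\mid m$ forces the diagonal into the norm-one subgroup. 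The two computations are equivalent diagram chases, but yours has the advantage that the map $T[n]\to\mu_n$ is visibly the one induced by the norm, exactly as asserted in the statement, whereas the paper's version packages everything into a single diagram at the cost of that identification being tacit. Your appeal to Serre for $T=\Res_\frak{m}\G_m/\G_m$ (reduced modulus, local contributions $\kbar(P)^\times$, descent along $\frak{m}=\Spec M$) matches the paper's citation of the structure theory of generalized Jacobians, so there is no gap.
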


				\begin{proof}
					The first statement, that $T = \Res_\frak{m}\M/\M$, follows from well known results on the structure of generalized Jacobians (see \cite[\S V Prop. 7]{Serre AGCF}). Let $\Res^1_\frak{m}\M$ denote the kernel of the norm map $\Res_\frak{m}\M \to \M$. The inclusion map $\Res^1_\frak{m}\M \to \Res_\frak{m}\M$ induces a surjective map onto $\Res_{\frak{m}}\M/\M$ with kernel $\mu_m$. This gives the middle rows of the following commutative and exact diagram.
					\[
					\xymatrix{
						& \mu_n \ar[r]\ar@{^{(}->}[d] &\Res_\frak{m}^1\mu_n \ar[r]\ar@{^{(}->}[d] \ar[r] &T[n]\ar@{^{(}->}[d]\\
						1  \ar[r]& \mu_m \ar[r]\ar[d]^n & \Res^1_\frak{m}\M \ar[r]\ar[d]^n & \frac{\Res_\frak{m}\M}{\M} \ar[r]\ar[d]^n & 1\\	
						1  \ar[r]& \mu_m \ar[r]\ar@{->>}[d]^{m/n} & \Res^1_\frak{m}\M \ar[r]\ar[d] & \frac{\Res_\frak{m}\M}{\M} \ar[r] & 1\\
						& \mu_n \ar[r] & 1&\\
					}
					\]
					The exact sequence in the statement of the lemma follows by applying the snake lemma.
				\end{proof}

				\subsection{The isogeny associated to a modulus setup}

				\begin{Lemma}\label{lem:defineG}
					Given a modulus setup $(n,\frak{m})$ there is a commutative group scheme $\frak{A}$ over $k$ and isogenies $\psi:\Pic_{C_\frak{m}} \to \frak{A}$ and $\varphi:\frak{A} \to \Pic_{C_\frak{m}}$ such that $\ker(\psi)= \frac{\Res^1_\frak{m}\mu_n}{\mu_n} \subset T[n]$ and $\varphi\circ\psi = [n]$. Moreover, we have a commutative and exact diagram
					\[
						\xymatrix{
							1 \ar[r]& T' \ar[r]\ar[d]^\varphi & \frak{A} \ar[r]\ar[d]^\varphi & \Pic_C \ar[r]\ar[d]^n & 0\\
							1 \ar[r]& T \ar[r] & \Pic_{C_\frak{m}} \ar[r] & \Pic_C \ar[r] & 0\,.
						}
					\]
					where $T'$ is a torus and $T'[\varphi] \simeq \mu_n$.
				\end{Lemma}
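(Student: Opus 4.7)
My plan is to construct $\frak{A}$ directly as a quotient of $\Pic_{C_\frak{m}}$ and derive everything else by a routine diagram chase, with Lemma~\ref{lem:Tn} doing the real work.

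First, set $K := \frac{\Res^1_\frak{m}\mu_n}{\mu_n}$. By Lemma~\ref{lem:Tn} this is a finite subgroup scheme of $T[n]$, hence of $\Pic_{C_\frak{m}}[n]$. I would define $\frak{A} := \Pic_{C_\frak{m}}/K$ as an fppf quotient (legitimate since $K$ is finite and flat, so the quotient is representable by a commutative $k$-group scheme), and let $\psi:\Pic_{C_\frak{m}} \to \frak{A}$ be the quotient map, so that $\ker\psi = K$ by construction. Because $K \subset \Pic_{C_\frak{m}}[n]$, the endomorphism $[n]:\Pic_{C_\frak{m}} \to \Pic_{C_\frak{m}}$ factors through $\psi$ by the universal property of the quotient; call the resulting map $\varphi:\frak{A} \to \Pic_{C_\frak{m}}$, so $\varphi\circ\psi = [n]$ by definition.

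Next, I would produce the diagram by quotienting the exact sequence \eqref{eq:PicCmseq} by $K$. Since $K$ is contained in $T$, applying the snake lemma (or equivalently the long exact sequence of $\mathrm{R}^i$ of the fppf quotient) to the vertical inclusion $K \hookrightarrow T \hookrightarrow \Pic_{C_\frak{m}}$ with trivial map to $\Pic_C$ yields the short exact sequence $0 \to T' \to \frak{A} \to \Pic_C \to 0$ on top, where $T' := T/K$. Commutativity of the right square is immediate from $\varphi\circ\psi = [n]$: the composition $\frak{A} \xrightarrow{\varphi} \Pic_{C_\frak{m}} \to \Pic_C$ sends $\psi(x)$ to the class of $nx$, which agrees with the path $\frak{A} \to \Pic_C \xrightarrow{[n]} \Pic_C$ because $\psi$ restricts to the identity on the quotient $\Pic_C$. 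The left square commutes by the functoriality of the construction: the restriction of $\varphi$ to $T'$ is the map $T/K \to T$ induced by $[n]:T \to T$.

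It remains to check the two geometric assertions. For $T' = T/K$ being a torus: $T$ is a torus and $K$ is a finite subgroup scheme of multiplicative type (since $\mu_n \subset K \subset T[n]$ and everything is diagonalizable after base change), so $T/K$ is again a torus. For $T'[\varphi] \simeq \mu_n$: the kernel of $\varphi|_{T'}$ is precisely $T[n]/K$, and by the exact sequence of Lemma~\ref{lem:Tn} this quotient is $\mu_n$.

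The main obstacle, such as it is, is bookkeeping: one must keep track of the fact that $\Pic_{C_\frak{m}}$ is not connected (it has component group $\Z$), so $[n]$ is not an isogeny in the classical sense of surjective with finite kernel. I would address this either by interpreting "isogeny" as "homomorphism with finite kernel" throughout, or equivalently by restricting the whole diagram to identity components $J_\frak{m}$ and $J$, which does not affect any of the $n$-torsion computations since $K \subset J_\frak{m}$.
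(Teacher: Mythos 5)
Your construction is exactly the paper's: form the quotient of $\Pic_{C_\frak{m}}$ by the finite subgroup $\frac{\Res^1_\frak{m}\mu_n}{\mu_n}\subset T[n]$ furnished by Lemma~\ref{lem:Tn}, obtain $\varphi$ from $\ker(\psi)\subset\ker[n]$, and read off the extension structure, the torus $T'=T/\ker(\psi)$, and $T'[\varphi]\simeq T[n]/\ker(\psi)\simeq\mu_n$ from the exact sequence of Lemma~\ref{lem:Tn}; your version just spells out the fppf-quotient and snake-lemma bookkeeping that the paper leaves implicit. The only blemish is the parenthetical claim $\mu_n\subset K$, which is neither needed nor justified — multiplicative type of $K$ already follows from $K\subset T[n]$ with $T$ a torus — so it does not affect the argument.
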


				\begin{proof}
					By Lemma~\ref{lem:Tn}, $\Pic_{C_\frak{m}}$ contains a finite group scheme isomorphic to $\Res^1_\frak{m}\mu_n/\mu_n$. The quotient of $\Pic_{C_\frak{m}}$ by this subgroup scheme yields an isogeny $\psi: \Pic_{C_\frak{m}} \to \frak{A}$. The existence of $\varphi$ follows from the fact that $\ker(\psi)$ is contained in the kernel of multiplication by $n$. Since $\ker(\psi) \subset T$, $\frak{A}$ is an extension of $\Pic_C$. The assertion that $T'[\varphi] \simeq \mu_n$ follows from Lemma~\ref{lem:Tn}.
				\end{proof}		

				\begin{Remark}
					When $n = m = \deg(\frak{m}) = 2$, we have that $T[n] \simeq \mu_n$. Hence $\psi$ is the identity map on $\frak{A} = \Pic_{C_{\frak{m}}}$ and $\varphi$ is multiplication by $2$.
				\end{Remark}

				\subsection{Description using divisor classes}
				A function $f \in k(C)^\times$ that is regular on $\frak{m}$  gives, by restriction, an element $f|_\frak{m} \in M$, where $\Spec(M) = \frak{m}$. We use $\Div_\frak{m}(C)$ to denote the divisors of $C$ that have support disjoint from $\frak{m}$. 

				\begin{Lemma}\label{lem:2.5} Let $\frak{A}$ be as defined in Lemma~\ref{lem:defineG}. Then
					\begin{align*}
						\Pic_{C}(\kbar) 
						&= \Div(C_\kbar)/\{ \divv(f) \;:\; f \in \kbar(C_\kbar)^\times \}\,,\\
						\Pic_{C_\frak{m}}(\kbar) 
						&= \Div_\frak{m}(C_\kbar)/\{ \divv(f) \;:\; f \in \kbar(C_\kbar)^\times,\;f|_\frak{m} = 1\}\,,\\
						\frak{A}(\kbar) 
						&= \Div_\frak{m}(C_\kbar)/\{ \divv(f) \;:\; f \in \kbar(C_\kbar)^\times,\;f|_\frak{m}  \in \Res^1_\frak{m}\mu_n\}\,.
					\end{align*}
					Moreover, $\varphi : \frak{A} \to \Pic_{C_\frak{m}}$ is induced by multiplication by $n$ on $\Div_\frak{m}(C_\kbar)$.
				\end{Lemma}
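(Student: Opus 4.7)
The plan is to verify each description and then the claim about $\varphi$ in order, using the standard descriptions of $\Pic_C(\kbar)$ and $\Pic_{C_\frak{m}}(\kbar)$ from the literature as the starting point. The description of $\Pic_C(\kbar)$ is tautological for a nice curve. The description of $\Pic_{C_\frak{m}}(\kbar)$ is the classical description of the Picard group of the singular curve associated to a modulus, recorded in Serre \cite{SerreAGCF}, Chapter IV; divisors on $C_\frak{m}$ correspond to divisors on $C$ disjoint from $\frak{m}$, and principal divisors to functions $f$ with $f(\frak{m}) = 1$. With these in hand, the description of $\frak{A}(\kbar)$ will follow from the construction of $\psi$ as a quotient by $\Res^1_\frak{m}\mu_n/\mu_n$, and the formula for $\varphi$ from the factorization $\varphi \circ \psi = [n]$ in Lemma~\ref{lem:defineG}.

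The key computation is to identify the image of $\Res^1_\frak{m}\mu_n/\mu_n$ inside $\Pic_{C_\frak{m}}(\kbar)$ as a set of divisor classes. The inclusion $T(\kbar) = (M \otimes_k \kbar)^\times/\kbar^\times \hookrightarrow \Pic_{C_\frak{m}}(\kbar)$ sends $u = (u_P)_P$ to the class $[\divv(f)]$ for any $f \in k(C_\kbar)^\times$ with $f(\frak{m}) = u$; such $f$ exists by standard interpolation at the finitely many points of $\frak{m}(\kbar)$, and the class depends only on $u$ modulo $\kbar^\times$ since scaling $f$ by a constant does not change its divisor. Restricted to $\Res^1_\frak{m}\mu_n/\mu_n \subset T$, the image is the set of classes $[\divv(f)]$ with $f(\frak{m}) \in \Res^1_\frak{m}\mu_n(\kbar)$. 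Since $1 \in \Res^1_\frak{m}\mu_n$, adjoining this family of relations to those defining $\Pic_{C_\frak{m}}(\kbar)$ absorbs the original ``$f(\frak{m}) = 1$'' relations, which yields the stated presentation of $\frak{A}(\kbar)$.

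For the last assertion, $\psi$ on $\kbar$-points is the natural surjection sending $[D]_{\Pic_{C_\frak{m}}}$ to the class $[D]_\frak{A}$ of the same divisor in the finer quotient. Since $\psi$ is a separable isogeny, it is surjective on $\kbar$-points, and since $\varphi \circ \psi = [n]$, for any $D \in \Div_\frak{m}(C_\kbar)$ we obtain $\varphi([D]_\frak{A}) = n \cdot [D]_{\Pic_{C_\frak{m}}} = [nD]_{\Pic_{C_\frak{m}}}$. Hence $\varphi$ is induced by multiplication by $n$ on $\Div_\frak{m}(C_\kbar)$.

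The main obstacle, more a matter of careful bookkeeping than of real conceptual difficulty, is the identification in the second paragraph: aligning the abstract description of $\Res^1_\frak{m}\mu_n/\mu_n$ as a subgroup of $T$ given by Lemma~\ref{lem:Tn} with its concrete realization as a set of divisor classes via evaluation of rational functions. Once that identification is pinned down, the rest of the lemma follows by unwinding the quotient $\psi$ and applying $\varphi \circ \psi = [n]$.
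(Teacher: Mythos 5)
Your proposal is correct and follows essentially the same route as the paper: cite the classical descriptions of $\Pic_C(\kbar)$ and $\Pic_{C_\frak{m}}(\kbar)$, realize $T(\kbar)$ (and hence the image of $\Res^1_\frak{m}\mu_n$) as classes of divisors $\divv(f)$ with prescribed evaluation $f(\frak{m})$, and deduce the formula for $\varphi$ from $\varphi\circ\psi=[n]$. The extra bookkeeping you supply (interpolation to realize a given $u$, surjectivity of $\psi$ on $\kbar$-points) just fills in details the paper leaves implicit.
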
	

				\begin{proof}
					The first two statements are well known (see \cite{SerreAGCF}; note that $f|_\frak{m}  =1$ if and only if $f \equiv 1\bmod \frak{m}$, since $\frak{m}$ is reduced). The $\kbar$-points of the subgroup $T = \Res_\frak{m}\M/\M \subset \Pic_{C_\frak{m}}$ are represented by divisors of functions which do not vanish on $\frak{m}$,
				\[
					T(\kbar) = \frac{
						\{ \divv(f) \;:\; f \in \kbar(C_\kbar)^\times,\;f|_\frak{m}  \in \Mbar^\times \}}{
						\{ \divv(f) \;:\; f \in \kbar(C_\kbar)^\times,\;f|_\frak{m}  = 1\}}
						\,.
				\]
				The description of $\frak{A}(\kbar)$ in the statement then follows from the fact that $\frak{A}$ is the quotient of $\Pic_{C_\frak{m}}$ by the image of $\Res_\frak{m}^1\mu_n$ in $T$. The final statement follows easily from the fact that $\varphi\circ\psi$ is multiplication by $n$ on $\Pic_{C_\frak{m}}$.
				\end{proof}

				\subsection{Component groups}

				The component groups of $\Pic_{C_\frak{m}}, \Pic_C$ and $\frak{A}$ are all isomorphic to $\Z$, the isomorphism being given by the degree map on divisor classes. The degree $0$ component of $\frak{A}$ is a semiabelian variety $A_\frak{m}$ fitting into an exact sequence,
				\begin{equation}\label{eq:Amsemiabelian}
					1 \to T' \to A_\frak{m} \to J \to 0\,.
				\end{equation}
				In particular, $A_\frak{m}$ is geometrically connected.
				We label the components
				\begin{equation}\label{label:components}
					\Pic_C = \bigsqcup_{i \in \Z} J^i\,, \quad \quad
					\Pic_{C_\frak{m}} = \bigsqcup_{i \in \Z} J_\frak{m}^i\,,\quad\quad 										\frak{A}= \bigsqcup_{i \in \Z}A_\frak{m}^i\,,
				\end{equation}
				so that the superscripts denote the image under the degree map. To ease notation we also denote the degree $0$ components by $J = J^0$, $J_\frak{m}=J_\frak{m}^0$ and $A_\frak{m} = A^0_\frak{m}$. For any $i \in \Z$, $J^i$ and $J^i_\frak{m}$ are torsors under $J$ and $J_\frak{m}$, respectively.

				Let $\frak{m}' \in \Div_\frak{m}(C)$ be an effective reduced $k$-rational divisor linearly equivalent to and with disjoint support from $\frak{m}$ (which exists by Bertini's theorem, provided $k$ has sufficiently many elements). Then $\frak{m}'$ determines classes in $\Pic_C$, $\Pic_{C_\frak{m}}$ and $\frak{A}$, which generate, in each, a subgroup scheme isomorphic to the constant group scheme $\Z$. Let $\calJ$, $\calJ_\frak{m}$ and $\calA_\frak{m}$ denote the corresponding quotient group schemes, which exist since the category of commutative algebraic groups is abelian. We have,
				\begin{equation}\label{eq:choosed}
					\calJ := \frac{\Pic_C}{\Z  \frak{m}'} = \bigsqcup_{i = 0}^{m-1} J^i\,, \quad \quad
					\calJ_\frak{m} := \frac{\Pic_{C_\frak{m}}}{\Z  \frak{m}'} = \bigsqcup_{i = 0}^{m-1} J_\frak{m}^i\,,\quad\quad 								
					\calA_\frak{m} := \frac{\frak{A}}{\Z  \frak{m}'} = \bigsqcup_{i = 0}^{m-1}A_\frak{m}^i\,,
				\end{equation}
				where we have abused notation slightly by writing $\frak{m}'$ to also denote its class in $\Pic_C$, $\Pic_{C_\frak{m}}$ and $\frak{A}$, respectively. 

				\begin{Remark}
 It is not generally true that all effective divisors linearly equivalent to and disjoint from $\frak{m}$ give the same class in $\Pic_{C_\frak{m}}$, so the quotient maps $\Pic_{C_\frak{m}} \to \calJ_\frak{m}$ and $\frak{A} \to \calA_\frak{m}$ may depend on the choice for $ \frak{m}'$. However, the map $\Pic_C \to \calJ$ depends only on $\frak{m}$.
				\end{Remark}

The maps $\psi$ and $\varphi$ of Lemma~\ref{lem:defineG} induce maps $\psi: \calJ_\frak{m} \to \calA_\frak{m}$ and $\varphi: \calA_\frak{m} \to \calJ_\frak{m}$ whose composition is multiplication by $n$. The map $\varphi$ induces a morphism of exact sequences of group schemes,
				\begin{equation}\label{eq:morph2}
					\xymatrix{
						0 \ar[r]& T' \ar[r]\ar[d]^{\varphi}& \calA_\frak{m}\ar[d]^\varphi\ar[r]& \calJ \ar[r]\ar[d]^n& 0\,\phantom{.}\\
						0 \ar[r]& T  \ar[r]& \calJ_\frak{m} \ar[r]& \calJ \ar[r]& 0\,,
					}
				\end{equation}
				and in particular an isogeny of semiabelian varieties,
				\begin{equation}\label{eq:definephi}
					\varphi \colon A_\frak{m} \To J_\frak{m}\,.
				\end{equation}
				\begin{Lemma}\label{lem:torsion1} There is a commutative and exact diagram,
					\begin{equation}\label{diag:torsion1}
							\xymatrix{
								\mu_n \ar@{^{(}->}[d] \ar@{=}[r] & \mu_n \ar@{^{(}->}[d]\\
								A_\frak{m}[\varphi] \ar@{^{(}->}[r] \ar@{->>}[d] 
								& \calA_\frak{m}[\varphi] \ar@{->>}[r]^-{\frac{1}{\ell}\deg} \ar@{->>}[d]& \Z/n\Z\ar@{=}[d]\\
								 J[n] \ar@{^{(}->}[r] & \calJ[n]  \ar@{->>}[r]^-{\frac{1}{\ell}\deg} & \Z/n\Z \,.
							}
					\end{equation}
				\end{Lemma} 

\begin{proof}
	The first and second columns are, respectively, the kernels of the morphism of exact sequences appearing in Lemma~\ref{lem:defineG} and Diagram~\eqref{eq:morph2}. They are exact by the snake lemma, since $\varphi:T' \to T$ is an epimorphism. By Lemma~\ref{lem:2.5}, a divisor $D \in \Div_\frak{m}(C_\kbar)$ represents a class in $\calA_\frak{m}[\varphi]$ if and only if $nD = a\frak{m}' + \divv(f)$, for some $a \in \Z$ and $f \in k(C_\kbar)^\times$ with $f|_\frak{m}  \in \Res^1_\frak{m}\mu_n$. In particular, $n \deg(D) = \deg(nD) = \deg(a\frak{m}') = an\ell$. So $\ell$ divides $\deg(D)$. As every class in $\calA_\frak{m}$ can be represented by a divisor of degree $1 \le d \le m$ this shows that the maps in the first row are well defined. By definition, $A_\frak{m}[\varphi]$ is the intersection of the kernels of the maps $\varphi$ and $\deg$ on $\calA_\frak{m}$. Surjectivity in the middle row follows from the fact that $A_\frak{m}$ is a divisible group. Namely, there exists $\frak{n}' \in \Div(\Cbar)$, necessarily of degree $\ell$, such that the class of $n\frak{n}'$ is equal to that of $\frak{m}'$ in $\frak{A}(\kbar)$. By Lemma~\ref{lem:2.5}, $\varphi([\frak{n}']) = [n\frak{n'}] = [\frak{m}'] = 0$ in $\mathcal{A}_\frak{m}$. Thus the middle row is exact. The same argument (applied to $\calJ$ in place of $\calA_\frak{m}$) shows the same for the bottom row.
\end{proof}

				\subsection{Extended Weil pairings} 

					We now define a bilinear pairing
					\[
						e : \calJ_\frak{m}[n] \times \calJ_\frak{m}[n] \to \mu_n\,.
					\]
					Fix $f \in k(C)^\times$ such that $\divv(f) = \frak{m}'-\frak{m}$. Given $\calD_1,\calD_2 \in \calJ_\frak{m}[n]$, choose representative divisors $D_1,D_2 \in \Div_\frak{m}(C_\kbar)$, and let $d_i = \deg(D_i)/\ell$, where we remind the reader that $\ell := m/n$. There exist unique functions $h'_i \in k(C_\kbar)^\times$ such that $nD_i = \divv(h_i') + d_i\frak{m'}$ and $h'_i|_\frak{m} = 1$. Set $h_i = f^{d_i}h'_i$, so that $nD_i = \divv(h_i) + d_i\frak{m}$. Define:
					\begin{equation}\label{eq:WPdef}
						e(\calD_1,\calD_2) = (-1)^{d_1d_2}\prod_{P \in C(\kbar)}(-1)^{n(\ord_PD_1)(\ord_PD_2)}\frac{h_2^{\ord_PD_1}}{h_1^{\ord_PD_2}}(P) \in \kbar^\times\,.
					\end{equation}
					
					We note that when $D_1$ and $D_2$ have disjoint support, this can be written as
					\begin{equation}
						e(\calD_1,\calD_2) = (-1)^{d_1d_2}\frac{h_2(D_1)}{h_1(D_2)}\,.
					\end{equation}

					\begin{Proposition}\label{prop:WP2}
						The pairing $e$ is Galois equivariant and induces, via the surjective map $\psi:\calJ_{\frak{m}}[n] \to \calA_\frak{m}[\varphi]$ and the maps in ~\eqref{diag:torsion1}, nondegenerate Galois equivariant pairings
						\begin{align*}
							e&:\calA_\frak{m}[\varphi]\times\calA_\frak{m}[\varphi] \to \mu_n\,,\\
							e&:A_\frak{m}[\varphi]\times\calJ[n] \to \mu_n\,,\\
							e&: J[n]\times J[n] \to \mu_n\,.
						\end{align*}
						Moreover, the pairing on $J[n]\times J[n]$ coincides with the Weil pairing.
					\end{Proposition}

					\begin{Remark}
						The definition of $e$ given above depends on the choice of $\frak{m}'$ in~\eqref{eq:choosed} and the function $f$ with $\divv(f) = \frak{m}'-\frak{m}$. However, as shown in the proof below, the induced pairings on $A_\frak{m}[\varphi]\times \calJ[n]$ and $J[n]\times J[n]$ do not depend on these choices.
					\end{Remark}

				\begin{proof}
					One can check that the pairing $e:\calJ_\frak{m}[n] \times \calJ_\frak{m}[n] \to \mu_n$ is Galois equivariant  exactly as is done in \cite[Section 7]{PoonenSchaefer} where the situation of~\ref{ex:M_doublecover} is considered (one need only replace the function $x$ there with the function $f$ in the definition above).

						We will show that the orthogonal complements of $\Res^1_\frak{m}\mu_n/\mu_n$ and $T[n]$ with respect to $e$ are $\calJ_\frak{m}[n]$ and $J_\frak{m}[n]$, respectively. This is enough to ensure that $e$ induces the pairings stated. The pairing induced on $J[n]$ is evidently the Weil pairing (see \cite[Theorem 1]{Howe}), which is known to be nondegenerate. Nondegeneracy of the other pairings follows from the exactness of~\eqref{diag:torsion1}. Alternatively, Lemma~\ref{lem:WP} below gives an alternative description of this pairing using class field theory which is readily seen to be nondegenerate.
						
						Let $\calD_1 \in T[n]$. Then $\calD_1$ is represented by $D_1 = \divv(f)$ for some $f \in k(C_\kbar)^\times$ with $f|_\frak{m}  \in \Res_{\frak{m}}\mu_n$. Since $nD_1 = \divv(f^n)$ and $f^n|_\frak{m} = 1$ we must use $h_1 =f^n$ in the definition of the pairing. Suppose $\calD_2 \in \calJ[n]$ and let $D_2,h_2,d_2$ be as in the definition of the pairing. Then we have
						\begin{align*}
							e(\calD_1,\calD_2) &= \prod_{P \in C(\kbar)}(-1)^{n(\ord_Pf)(\ord_PD_2)}\frac{h_2^{\ord_Pf}}{f^{n\ord_PD_2}}(P)\\
							&=  \prod_{P \in C(\kbar)}(-1)^{(\ord_Pf)(\ord_Ph_2 + d_2\ord_P\frak{m})}\frac{h_2^{\ord_Pf}}{f^{\ord_Ph_2+d_2\ord_P\frak{m}}}(P) \quad\quad \text{(since $nD_2 = \divv(h_2)+d_2\frak{m}$.)}\\
							&=  \prod_{P \in C(\kbar)}(-1)^{d_2(\ord_Pf)(\ord_P\frak{m})}f^{-d_2\ord_P\frak{m}}(P) \quad\quad \text{(by Weil reciprocity)}\\
							&=  \prod_{P \in C(\kbar)}f^{-d_2\ord_P\frak{m}}(P) \quad\quad \text{(since $f|_\frak{m} $ is invertible)}\\
							&= N(f|_\frak{m} )^{-d_2}\,,
						\end{align*}
						where $N$ denotes the induced norm $\Res_\frak{m}\M \to \M$. From this one easily sees that $\Res^1_\frak{m}\mu_n/\mu_n$ lies in the kernel of the pairing and that $T[n]$ pairs trivially with the degree $0$ subgroup, $J_\frak{m}[n] \subset \calJ_\frak{m}[n]$.
					\end{proof}

				Taking Galois cohomology of~\eqref{diag:torsion1} yields a commutative and exact diagram
					\begin{equation}\label{eq:maindiagram}
						\xymatrix{
							&k^\times/k^{\times n} \ar[d] \ar@{=}[r] & k^\times/k^{\times n}\ar[d]\\
							\Z/n\Z \ar[r]^{\partial'}\ar@{=}[d]
							& \HH^1(A_\frak{m}[\varphi]) \ar[r] \ar[d] 
							& \HH^1(\calA_\frak{m}[\varphi]) \ar[r]^-{\frac{1}{\ell}\deg} \ar[d] 
							& \HH^1(\Z/n\Z) \ar@{=}[d]\\
							\Z/n\Z \ar[r]^{\partial}& \HH^1(J[n]) \ar[r]\ar[d]^\Upsilon & \HH^1(\calJ[n]) \ar[r]^-{\frac{1}{\ell}\deg}\ar[d]^{\Upsilon'} & \HH^1(\Z/n\Z)\\
							&\Br(k)[n] \ar@{=}[r]& \Br(k)[n]
						}
					\end{equation}
								
					\begin{Lemma}\label{lem:WPcupprod}
						 The images of $\partial(1)$ and $\partial'(1)$ in $\HH^1(J)$ and $\HH^1(A_\frak{m})$ are the classes of $J^\ell$ and $A_\frak{m}^\ell$, respectively. The maps $\Upsilon$ and $\Upsilon'$ are given by
						 \begin{align*}
						 	\Upsilon(\xi) &= \xi \cup_e \partial(1)\,, \text{ and}\\
						 	\Upsilon'(\xi) &= \xi \cup_e \partial'(1)\,,
						 \end{align*}
						 where $\cup_e$ denotes the cup product pairings
						 \begin{align*}
							\cup_e &: \HH^1(J[n]) \times \HH^1(J[n]) \to \HH^{2}(\mu_n) = \Br(k)[n]\,, \text{ and} 	\\
							\cup_e &: \HH^1(\mathcal{J}[n]) \times \HH^1(A_{\frak{m}}[\varphi]) \to \HH^{2}(\mu_n) = \Br(k)[n]
					\end{align*}
						determined by the $e$-pairings of Proposition~\ref{prop:WP2} (cf. \cite[page 38]{nsw}).
					\end{Lemma}

			\begin{proof}
			At the level of cocycles, $\partial(1)$ is represented by $\Gal_{k} \ni \sigma \mapsto [\sigma(D) - D] \in J[n]$, where $D \in \Div(\Cbar)$ is such that $nD$ is linearly equivalent to $\mathfrak{m}'$ and the square parentheses denote the class of a divisor in $\Pic(\Cbar)$. The divisor $D$ necessarily has degree $m/n = \ell$, so the image of this cocycle in $\HH^1(J)$ represents $J^\ell$. The claim that $\partial'(1)$ represents $A_\frak{m}^\ell$ is established similarly.
	
			The $e$-pairings of Proposition~\ref{prop:WP2} give commutative diagrams of pairings
	\begin{equation}\label{eq:pairings}
		\begin{array}[c]{cccccc cccccc}
			\mu_n & \times & \Z/n\Z & \rightarrow & \mu_n  &\phantom{\dots\dots}&& \mu_n & \times & \Z/n\Z & \rightarrow & \mu_n\\
			
			\rotatebox{90}{$\hookleftarrow$} && \rotatebox{90}{$\twoheadrightarrow$}&& \rotatebox{90}{$=$} &&& \rotatebox{90}{$\hookleftarrow$} && \rotatebox{90}{$\twoheadrightarrow$}&& \rotatebox{90}{$=$}\\
			A_{\frak{m}}[\varphi] & \times & \calJ[n] & \rightarrow & \mu_n &&& \mathcal{A}_{\frak{m}}[\varphi] & \times & \mathcal{A}_\frak{m}[\varphi] & \rightarrow & \mu_n\\
			\rotatebox{90}{$\twoheadleftarrow$} && \rotatebox{90}{$\hookrightarrow$}&& \rotatebox{90}{$=$}&&& \rotatebox{90}{$\twoheadleftarrow$} && \rotatebox{90}{$\hookrightarrow$}&& \rotatebox{90}{$=$}\\
			J[n]& \times & J[n] & \rightarrow & \mu_n &&& \mathcal{J}[n]& \times & A_\frak{m}[\varphi] & \rightarrow & \mu_n\,.
		\end{array}
	\end{equation}
		Since the maps $\Upsilon$ and $\Upsilon'$ are coboundary maps from the first columns and the maps $\partial$ and $\partial'$ are coboundary maps from the second columns we may apply \cite[Corollary 1.4.5]{nsw} once to each of the diagrams in~\eqref{eq:pairings} to deduce that $\Upsilon(\xi) = \xi \cup_e \partial(1)$ and $\Upsilon'(\xi) = \xi \cup_e \partial'(1)$.

		\end{proof}
		
		\subsection{Brauer class of a $k$-rational divisor class}
			
			Given a nice curve $C$, there is a well known exact sequence
			\begin{equation}\label{eq:defThetaC}
				0 \to \Pic(C) \To \Pic_C(k) \stackrel{\Theta_C}\To \Br(k)\,
			\end{equation}
			(see \cite{Lichtenbaum}). The map $\Theta_C$ gives the obstruction to a $k$-rational divisor class being represented by a $k$-rational divisor.
		
		\begin{Lemma}\label{lem:upsiloniota}
			Let $d:J(k) \to \HH^1(J[n])$ denote the connecting homomorphism in the Kummer sequence. For any $x \in J(k)$ we have $\Upsilon\circ d(x) = \ell\cdot\Theta_C(x)$.
		\end{Lemma}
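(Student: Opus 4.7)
The plan is to compute $\Upsilon\circ d(x)$ at the level of explicit $2$-cocycles in $\kbar^\times$, using the cup-product description from Lemma~\ref{lem:WPcupprod} together with the Weil-pairing formula~\eqref{eq:WPdef}, and then recognize the resulting cocycle as representing $\ell\cdot\Theta_C(x)$.

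First, by Lemma~\ref{lem:WPcupprod} we have $\Upsilon(d(x)) = d(x)\cup_e \delta(1)$, so I fix cocycle representatives for both factors. Let $D\in\Div(C_\kbar)$ represent $x\in J(k)$ and write $\sigma D - D = \divv(g_\sigma)$ for suitable $g_\sigma\in\kbar(C)^\times$; by \cite{Lichtenbaum}, the Brauer class $\Theta_C(x)$ is represented by the $2$-cocycle $c_\Theta(\sigma,\tau) = \sigma(g_\tau)\,g_\sigma\,g_{\sigma\tau}^{-1}$. Pick $E\in\Div(C_\kbar)$ with $nE = D + \divv(h)$; then $d(x)(\sigma) = [\sigma E - E]\in J[n]$, with $n(\sigma E - E) = \divv(g_\sigma\cdot\sigma h/h)$. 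For $\delta(1)$, pick $F\in\Div(C_\kbar)$ of degree $\ell$ with $nF = \frak{m}' + \divv(h_F)$; then $[F]\in J^\ell[n]$ lifts $1\in\Z/n$ under the map $\calJ[n]\to\Z/n$, so $\delta(1)(\sigma) = [\sigma F - F]$ with $n(\sigma F - F) = \divv(\sigma h_F/h_F)$.

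Next, I apply the cup-product formula together with the explicit Weil-pairing formula~\eqref{eq:WPdef}. After translating $E$ and $F$ within their linear equivalence classes to arrange disjoint supports avoiding $\frak{m}$ and $\frak{m}'$, the expression $(d(x)\cup_e\delta(1))(\sigma,\tau)= e\bigl([\sigma E - E],[\sigma\tau F - \sigma F]\bigr)$ becomes a finite product of values of the functions $g_\sigma\cdot\sigma h/h$ and $\sigma(\tau h_F/h_F)$ at $\kbar$-points of the two divisors, normalized so that $h_i(\frak{m})=1$. Applying Weil reciprocity to the pairs of functions involving the auxiliary $h$ and $h_F$, the contributions in which these appear reduce either to values at $\frak{m}$ (trivial by the normalization) or to $2$-coboundaries in $\kbar^\times$. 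What survives modulo coboundaries is the $2$-cocycle $(\sigma,\tau)\mapsto g_\sigma\bigl(\sigma\tau F - \sigma F\bigr)$.

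Finally, I rewrite $g_\sigma(\sigma\tau F - \sigma F)$ using $\sigma D - D = \divv(g_\sigma)$ and one further application of Weil reciprocity. Since $\sigma\tau F - \sigma F$ has degree zero and is built from divisors of degree $\ell$, expanding and using the cocycle identity $\sigma(g_\tau)\,g_\sigma = g_{\sigma\tau}\cdot c_\Theta(\sigma,\tau)$ turns this product into $c_\Theta(\sigma,\tau)^\ell$, the factor of $\ell$ arising precisely because $\deg(F)=\ell$. In additive notation in $\Br(k)[n]$, this gives $\Upsilon(d(x)) = \ell\cdot\Theta_C(x)$. The main obstacle is the bookkeeping of signs, of $2$-coboundaries, and of the normalization terms (the condition $h'_i(\frak{m})=1$ and the auxiliary function $f$ with $\divv(f) = \frak{m}'-\frak{m}$ built into~\eqref{eq:WPdef}); the technical heart of the argument is verifying that all of these contribute only $2$-coboundaries so that only the $\ell$-th power of $c_\Theta$ remains. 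The computation is the natural generalization of \cite[\S7]{PoonenSchaefer}, where the analogous identity is proved in the case $n=\deg(\frak{m})=2$, to a modulus setup with $\frak{m}$ of arbitrary reduced support.
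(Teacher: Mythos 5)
Your proposal is correct in substance, but it takes a genuinely different route from the paper. Both start from $\Upsilon(d(x))=d(x)\cup_e\delta(1)$ (Lemma~\ref{lem:WPcupprod}); after that the paper argues softly: the image of the Kummer map $d$ is isotropic for $\cup_e$, which makes $\cup_e$ compatible with the evaluation pairing $\langle\,,\,\rangle:J(k)\times\HH^1(J)\to\Br(k)$, so $d(x)\cup_e\delta(1)=\langle x,[J^\ell_n]\rangle=\ell\langle x,[J^1]\rangle$, and Lichtenbaum's theorem $\langle x,[J^1]\rangle=\Theta_C(x)$ finishes in one line. You instead compute the cup product with explicit $2$-cocycles, in the spirit of \cite[Section 7]{PoonenSchaefer} and of the paper's own proof of Lemma~\ref{lem:twistcomputation}; this amounts to reproving the compatibility and Lichtenbaum's formula simultaneously. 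Your reductions do check out, and are in fact simpler than you suggest: since $d(x)_\sigma$ and $\sigma\delta(1)_\tau$ both lie in $J[n]$ (degree zero, so $d_1=d_2=0$ in \eqref{eq:WPdef}), the pairing is just the ordinary Weil pairing, and the two auxiliary terms $\sigma(\tau h_F/h_F)(\sigma E-E)$ and $(\sigma h/h)(\sigma\tau F-\sigma F)$ are literally $2$-coboundaries, namely of the $1$-cochains $\sigma\mapsto(\sigma h_F/h_F)(E)$ and $\sigma\mapsto h(\sigma F-F)$, so neither Weil reciprocity nor the normalization at $\frak{m}$ is needed there; moreover $g_\sigma(\sigma\tau F-\sigma F)$ differs from $c_\Theta(\sigma,\tau)^{\ell}$ exactly by the coboundary of $\sigma\mapsto g_\sigma(\sigma F)$, the exponent $\ell$ arising from evaluating the constant $c_\Theta(\sigma,\tau)$ on the degree-$\ell$ divisor $\sigma\tau F$, as you say. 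The one point you must still nail down is the sign: with natural conventions the surviving term sits in the denominator, so the computation produces $c_\Theta^{\pm\ell}$ depending on the orientation of the Weil-pairing formula, the cup-product convention, and which cocycle represents $\Theta_C(x)$; since the identity lives in $\Br(k)[n]$ this is not automatic and must be tracked. In exchange for this bookkeeping (and the choices of representatives with disjoint supports), your approach is self-contained at the cocycle level, whereas the paper's is shorter but leans on Lichtenbaum's duality result and the compatibility diagram.
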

		
		\begin{proof}
			The image of $d$ is isotropic with respect to the Weil-pairing cup product $\cup_e$. This gives a commutative diagram of pairings
	\begin{equation*}\label{eq:LichtenbaumPairings}
	\begin{array}[c]{cccccc}
		\cup_e: &\HH^1(J[n]) & \times & \HH^1(J[n]) & \rightarrow & \Br(k) \\
		&{\tiny d}\;\rotatebox{90}{$\rightarrow$} && \rotatebox{90}{$\leftarrow$}&& \rotatebox{90}{$=$}\\
		\langle\,,\,\rangle: &J(k)& \times & \HH^1(J) & \rightarrow & \Br(k)
	\end{array}
	\end{equation*}
		By a result of Lichtenbaum (see the proof of \cite[Corollary 1]{Lichtenbaum}) we have that $\langle x,[J^1]\rangle = \Theta_C(x)$. By the previous lemma we have
		\[
			\Upsilon\circ d(x) = d(x) \cup_e \partial(1) = \langle x,[J^\ell]\rangle= \ell\cdot\langle x,[J^1]\rangle = \ell\cdot\Theta_C(x)\,.
		\]
		\end{proof}
		
		Let $J(k)_\bullet$ denote the kernel of the composition $\Upsilon\circ d : J(k) \to \HH^1(J[n]) \to \Br(k)$. Then $\Pic^0(C) \subset J(k)_\bullet \subset J(k)$ and, in general, any of these containments can be proper. Lemma~\ref{lem:upsiloniota} shows that $\Pic^0(C) = J(k)_\bullet$ when $\ell = 1$ (e.g., for the modulus setups of \ref{ex:M_doublecover} and \ref{ex:M_g1}) while the following corollary shows that $J(k)_\bullet = J(k)$ when $k$ is a local or global field and $C$ has a modulus setup as in~\ref{ex:M_canonical}.
					
		\begin{Corollary}\label{Cor:Ups}
			If 
			\begin{enumerate}
				\item\label{ups1} the period of $C$ divides $\ell$, or
				\item\label{ups2} $k$ is a local or global field and $\gcd(m,g-1)$ divides $\ell$,
			\end{enumerate}
			then $\Upsilon \circ d = 0$.
		\end{Corollary}

		\begin{proof}
			The image of $\Theta_C : J(k) \to \Br(k)$ is isomorphic to the cokernel of $\Pic^0(C) \to J(k)$, which is annihilated by the period of $C$ (\cite[Prop. 3.2]{PoonenSchaefer}). Over a local field, the period of $C$ divides $g-1$ (\cite[Prop. 3.4]{PoonenSchaefer}). Since the period also divides $m = \deg(\frak{m})$, \reff{ups2} implies that $\ell$ is divisible by the period locally. Hence $\Upsilon \circ d = 0$ locally. This must also be true globally by the local-global principle for $\Br(k)$.
		\end{proof}	

		We recall that the situation for $\Pic_{C_\frak{m}}$ is different.
		
		\begin{Lemma}\label{lem:Jipoints}
			The natural map $\Div_\frak{m}(C) \to \Pic_{C_\frak{m}}(k)$ is surjective. In particular, for any $i \ge 1$, $\Div^i(C) \ne \emptyset$ if and only if $J^i_\frak{m}(k) \ne \emptyset$.
		\end{Lemma}
		
		\begin{proof}
			The first statement follows from \cite[Lemma 3.5]{PoonenSchaefer}. The second follows from the first by the moving lemma.
		\end{proof}

		\section{$n$-coverings, $\varphi$-coverings and the descent setup}\label{sec:phicoverings}
	
	\subsection{$n$-coverings and $\varphi$-coverings}
		\begin{Definition}
			Suppose $\phi:A\to B$ is an isogeny of semiabelian varieties over $k$ and $V$ is a $B$-torsor. We say $\pi:V' \to V$ is a \defi{$\phi$-covering of $V$} if there exist isomorphisms $a,b$ of $\kbar$-varieties such that $a$ is compatible with the torsor structure of $V$, fitting into a commutative diagram
			\[
				\xymatrix{
					V'_\kbar \ar[r]^b \ar[d]^\pi & A_\kbar \ar[d]^\phi\\
					V_\kbar \ar[r]^a & B_\kbar\,.
				}
			\]
			Let $\Cov^\phi(V)$ denote the set of isomorphism classes of $\phi$-covering of $V$, considered as objects in the category of $V$-schemes. 
		\end{Definition}
		
		To say that $a$ is compatible with the torsor structure means that $a(x+y) = a(x) + y$. Note that the isomorphism $b$ endows $V'$ with the structure of a torsor under $A$ by the rule $x+y = b^{-1}(b(x) + y)$. The classes of these torsors satisfy $\phi_* [V'] = [V] \in \HH^1(B)$. When nonempty, $\Cov^\phi(V)$ is a principal homogeneous space for the group $\HH^1(\ker(\phi))$ acting by twisting. The isogenies $\varphi : A_\frak{m} \to J_\frak{m}$ and $n : J \to J$ give distinguished points in $\Cov^\varphi(J_\frak{m})$ and $\Cov^n(J)$, endowing these sets with the structure of an abelian group and isomorphisms to $\HH^1(A_\frak{m}[\varphi])$ and $\HH^1(J[n])$, respectively.
		
			Suppose $(n,\frak{m})$ is a modulus setup for a nice curve $C$ over $k$. The isogenies $\varphi:A_\frak{m} \to J_\frak{m}$ and $n:J \to J$ give rise to the notions of $\varphi$-coverings of $J^i_\frak{m}$ and $n$-coverings of $J^i$ for each $i \ge 0$. The pullback of a $\varphi$-covering $V \to J^1_\frak{m}$ along the canonical map $(C-\frak{m}) \to J^1_\frak{m}$ sending a geometric point $x$ to the class of the divisor $x$ in $J^1_\frak{m}(\kbar) \subset \Pic_{C_\frak{m}}(\kbar)$ yields an unramified covering of $(C-\frak{m})$. Corresponding to this is a unique (up to isomorphism) morphism $\pi:Y \to C$ of smooth projective curves over $k$ which is unramifed outside $\frak{m}$.

		\begin{Definition}
			Suppose $(n,\frak{m})$ is a modulus setup for a nice curve $C$ over $k$. A morphism $\pi: Y \to C$ of nice curves is a \defi{$\varphi$-covering of $C$} if it is the unique extension of the pullback of a $\varphi$-covering of $J^1_\frak{m}$ along the canonical map $(C-\frak{m}) \to J^1_\frak{m}$. A morphism $\pi:X\to C$ is an \defi{$n$-covering of $C$} if it is the pullback of an $n$-covering of $J^1$ along the canonical map $C \to J^1$. Let $\Cov^n(C)$ and $\Cov^\varphi(C)$ denote, respectively, the sets of isomorphism classes of $n$-coverings and $\varphi$-coverings of $C$ (considered as objects in the category of $C$-schemes).
		\end{Definition}
	
		\begin{Proposition}\label{prop:cft}
			An $n$-covering of $C$ is a $k$-form of the maximal unramified abelian covering of $C$ of exponent $n$. A $\varphi$-covering of $C$ is an abelian covering of exponent $n$ and conductor $\frak{m}$ whose maximal unramified subcovering is an $n$-covering.
		\end{Proposition}
		
		\begin{proof}
			
			Any unramified abelian extension of $\kbar(\Cbar)$ of exponent $n$ is obtained by adjoining $n$-th roots of functions $f \in \kbar(\Cbar)$ with $\divv(f) = nD \in n\Div(\Cbar)$. For any such function, the class of the divisor $D$ lies in $J[n]$. For the (unique up to isomorphism) $n$-covering $\pi :C' \to \Cbar$ we have $J[n](\kbar) = \ker( \pi^* : \Pic^0(\Cbar) \to \Pic^0(C'))$. Thus $\kbar(C')$ contains $n$-th roots of all functions $f$ as above. This proves the first statement.

			Similarly, the field extension of $\kbar(C_\kbar)$ corresponding to a $\varphi$-covering is the compositum of the extensions corresponding to the index $n$ subgroups of $A_\frak{m}[\varphi]$, or equivalently, to the points of order $n$ in the Cartier dual $\calJ[n]$ (the duality is given by Proposition~\ref{prop:WP2}). If $D \in \Div(C_\kbar)$ represents a point of order $n$ in $\calJ[n]$, then there exists a function $h_D \in \kbar(C_\kbar)^\times$ such that $\divv(h_D) = nD - d\frak{m}$ for some $d \in \Z$. The corresponding extension of $\kbar(C_\kbar)$ is obtained by adjoining an $n$-th root of $h_D$. Such extensions are of conductor $\frak{m}$. The maximal unramified subextension is obtained by adjoining $n$-th roots only of those $h_D$ for which $\divv(h_D)-nD = d\frak{m}$ with $d \equiv 0 \bmod n$. These correspond to points in $J[n]$ showing that the maximal unramified subcover of a $\varphi$-covering is an $n$-covering.

		\end{proof}

		When nonempty, the sets $\Cov^n(C)$ and $\Cov^\varphi(C)$ are principal homogeneous spaces for $\HH^1(J[n])$ and $\HH^1(A_\frak{m}[n])$, respectively, acting by twisting. By geometric class field theory the canonical pullback maps $p:\Cov^n(J^1) \to \Cov^n(C)$ and $p_\frak{m}:\Cov^\varphi(J^1_\frak{m}) \to \Cov^\varphi(C)$ are bijections that are equivariant for the actions by $\HH^1(J[n])$ and $\HH^1(A_\frak{m}[n])$. By Proposition~\ref{prop:cft} there is a canonical map $u:\Cov^\varphi(C) \to \Cov^n(C)$, which associates to a $\varphi$-covering of $C$ the maximal unramified intermediate covering of $C$. Let $\Cov_\frak{m}^n(C)$ denote the image of $u$. 
		
		Given a $\varphi$-covering $\pi: F_\frak{m} \to J^i_\frak{m}$, the torsor structures restrict to actions of the tori $T' \subset A_\frak{m}$ and $T \subset J_\frak{m}$ on $F_\frak{m}$ and $J^{i}_\frak{m}$, respectively. The quotients $F = F_\frak{m}/T'$ and $J^i = J^{i}_\frak{m}/T$ are torsors under $J = A_\frak{m}/T' = J_\frak{m}/T$. The existence of these quotients in the category of varieties follows from \cite[Theoreme 7.2]{Grothendieck}, while the induced torsor structure can be established as in the proof of \cite[Lemma 3.1]{Borovoi}. Since the actions of $T'$ and $T$ are equivariant with respect to $\pi$, there is an induced map $\pi':F \to J^i$ which is a torsor under $A_\frak{m}[\varphi]/T'[\varphi] = J[n]$. This induces a map $q: \Cov^\varphi(J^{i}_\frak{m}) \to \Cov^n(J^i)$. Let $\Cov_\frak{m}^n(J^i)$ denote the image of $q$. We record the following.
		
		\begin{Lemma}\label{lem:defm}
			\phantom{.}\hfill
			\begin{enumerate}
				\item The isomorphism $\Cov^n(J) \simeq \HH^1(J[n])$ restricts to $\Cov_\frak{m}^n(J) \simeq \ker(\Upsilon)$.
				\item The maps defined above fit into a commutative diagram,
					\[ \xymatrix{
							\Cov^\varphi(J^1_\frak{m}) \ar[r]^{p_\frak{m}} \ar[d]^q & \Cov^\varphi(C) \ar[d]^{u} \\
							\Cov^n(J^1) \ar[r]^p & \Cov^n(C)\,.
						}
					\]
					In particular, $p$ restricts to give a bijection $p: \Cov^n_\frak{m}(J) \to \Cov^n_\frak{m}(C)$.
			\end{enumerate}
		\end{Lemma}
		
		\begin{proof}
			The first statement follows from exactness in~\eqref{eq:maindiagram}. The second follows from the universal property of the fibered product.
		\end{proof}

		Using $\varphi$-coverings we can give an alternative description of the $e$-pairing on $A_\frak{m}[\varphi]\times \calJ[n]$ given in Proposition~\ref{prop:WP2}. Let $(Y,\pi)$ be a $\varphi$-covering of $C_\kbar$. Let $M = \kbar(Y)$ and $K = \kbar(C_\kbar)$, which we identify with the subfield $\pi^*(K) \subset M$. There are canonical isomorphisms $r:A_\frak{m}[\varphi] \simeq \Gal(M/\pi^*K)$ and $s:\calJ[n] \simeq (K^{\times}\cap M^{\times n})/K^{\times n}$. The latter sends the class of a divisor $D$ to the class of a function $h \in K$ such that $\divv(h) = nD - d\frak{m}$, for some integer $d$. Kummer theory gives a bilinear pairing $\kappa :\Gal(M/K) \times (K^{\times}\cap M^{\times n})/K^{\times n} \to \mu_n.$

		\begin{Lemma}\label{lem:WP}
			For $\calD_1 \in A_\frak{m}[\varphi]$ and $\calD_2 \in \calJ[n]$ we have $e(\calD_1,\calD_2) = \kappa(r(\calD_1),s(\calD_2))$.
		\end{Lemma}

		\begin{proof}
			The analogous statement for the induced pairing on $J[n]\times J[n]$ is the main result of \cite{Howe}. As described in Section 4 of op. cit. it suffices to prove the statement when $k$ is a finite field. Let $\calD_1 \in A_\frak{m}[\varphi](\kbar)$ and $\calD_2 \in \calJ[n](\kbar)$. By possibly enlarging $k$ if necessary we can arrange that the $\calD_i$ are represented by $k$-rational divisors $D_i$ and, moreover, that $J_\frak{m}[n](k) = J_\frak{m}[n](\kbar)$. Take $g \in k(C)^\times$ such that $\divv(g) = nD_2 - d\frak{m}$. Then, as seen in the proof of Proposition~\ref{prop:cft}, $g \in M^{\times n}$. Let $F : J_\frak{m} \to J_\frak{m}$ be the $k$-Frobenius. Then $J_\frak{m}[n](k) \subset \ker(F-1)$, so $F-1$ factors through multiplication by $n$, and hence through $\varphi$. Moreover, the extension $M/K$ extends to a Galois extension $N/K$ with $\Gal(N/K) \simeq J_\frak{m}[F-1] \simeq J_\frak{m}(k)$. 	
			
			All of this fits into a commutative diagram
			\[
				\xymatrix{
				J_\frak{m}(k) \ar[rr]^-\simeq \ar@{->>}[dr]^{(F-1)/\varphi} \ar@{->>}[d]_-{(F-1)/n}&& \Gal(N/K) \ar[d]\\
				J_\frak{m}(k)[n] \ar@{->>}[r]^\psi & A_\frak{m}[\varphi](k) \ar[d]_-{\kappa(r(\cdot),s(\calD_2))} \ar[r]^-\simeq_-r & \Gal(M/K) \ar[d]\\
				&\mu_n(k) & \Gal(K(g^{1/n})/K) \ar[l]^-\simeq\,. }
			\]
			The map from the top left to the bottom right is given by the Artin map of class field theory and, hence, the composition $I^0(k) \to J_\frak{m}(k) \to \mu_n(k)$ from the $k$-id\`eles of $K$ to $\mu_n(k)$ can be computed with Hilbert norm residue symbols (see \cite[\S6.30, p. 150]{SerreAGCF}). Take $a \in I^0(k)$ to be an id\`ele whose divisor class is equal to the class of $D_1$ in $J_\frak{m}(k)$ and $b \in I^0(k)$ such that $(F-1)/n [b] = [a]$ in $J_\frak{m}(k)$. To prove the lemma amounts to checking that $e(\calD_1,\calD_2)$ is equal to the product of the Hilbert norm residue symbols, $\prod_{P\in C(\kbar)}(g,b)_P$. This can be verified exactly as in the calculation of \cite[Section 3]{Howe}.			
		\end{proof}

			\subsection{Soluble coverings}
					For an isogeny $\phi \colon A \to B$ of semiabelian varieties and $V$ a torsor under $B$, let $\Cov_{\textup{sol}}^\phi(V)$ denote the set of isomorphism classes of $\phi$-coverings $U \to V$ with $U(k) \ne \emptyset$. When $k$ is a global field, let $\Sel^\phi(V)$ denote the set of isomorphism classes of $\phi$-coverings of $V$ that are soluble everywhere locally. Define similarly $\Cov_{\textup{sol}}^n(C)$, $\Cov_{\textup{sol}}^\varphi(C)$, $\Sel^n(C)$ and $\Sel^\varphi(C)$.
					
					Recall that for a nice curve $C$ over $k$ with modulus setup $(n,\frak{m})$, $J(k)_\bullet$ denotes the kernel of the composition $\Upsilon \circ d : J(k) \to \HH^1(J[n]) \to \Br(k)$.
					
			\begin{Lemma}\label{lem:CovsolJ}
				$\Cov_\frak{m}^n(J) \cap \Cov_\textup{sol}^n(J) = d(J(k)_\bullet)$.
			\end{Lemma}
			
			\begin{proof}
				$\Cov_\textup{sol}^n(J) = d(J(k))$ and $\Cov_\frak{m}^n(J) = \ker(\Upsilon)$ by Lemma~\ref{lem:defm}.
			\end{proof}	
			
			The reciproicty law in the Brauer group yields the following.
			
			\begin{Corollary}\label{cor:selJ}
				If $k$ is a global field and $J(k_v)_\bullet = J(k_v)$ for all but at most one prime $v$, then $\Sel^n(J) \subset \Cov_\frak{m}^n(J)$.
			\end{Corollary}
			
			This corollary shows that the subgroup $\Cov_\frak{m}^n(J) \subset \Cov^n(J)$ is large enough to be useful for arithmetic applications. We will derive analogous results for $\Cov_\frak{m}^n(C)$ and $\Cov_\frak{m}^n(J^i)$ as corollaries to the following theorem.
			
%
			
			\begin{Theorem}\label{thm:solublecoverings}
				The group $\HH^1(J[n])$ acts on the sets $\Cov^n(J^i)$ by twisting. This gives rise to simply transitive actions of:
				\begin{enumerate}
					\item\label{p1} $\HH^1(J[n])$ on $\Cov^n(J^i)$, when $[J^i]$ is divisible by $n$ in $\HH^1(J)$;
					\item\label{p2} $\HH^1(J[n])$ on $\Cov^n(C)$, when $[J^i]$ is divisible by $n$ in $\HH^1(J)$;
					\item\label{p3} $\ker(\Upsilon)$ on $\Cov_{\frak{m}}^n(J^i)$, when $[J^i_\frak{m}] \in \varphi_*(\HH^1(A_\frak{m})) \subset \HH^1(J_\frak{m})$;
					\item\label{p4} $\ker(\Upsilon)$ on $\Cov_{\frak{m}}^n(C)$, when $[J^1_\frak{m}] \in \varphi_*(\HH^1(A_\frak{m})) \subset \HH^1(J_\frak{m})$;
					\item\label{p5} $J(k)/nJ(k)$ on $\Cov^n_{\textup{sol}}(J^i)$, when $J^i(k) \ne \emptyset$;
					\item\label{p6} $d(J(k)_\bullet)$ on $\Cov^n_{\textup{sol}}(J^i) \cap \Cov_{\frak{m}}^n(J^i)$, when $J_\frak{m}^i(k) \ne \emptyset$;
				\end{enumerate}
					and, assuming $k$ is a global field, of
				\begin{enumerate}[resume]
					\item\label{p7} $\Sel^n(J)$ on $\Sel^n(J^i)$, when $[J^i] \in n\Sha(J)$;
					\item\label{p8} $\Sel^n(J)$ on $\Sel^n(J^i) \cap \Cov_{\frak{m}}^n(J^i)$, when $[J^i] \in n\Sha(J)$ and for all but at most one prime $v$ of $k$ we have $J(k_v)_\bullet = J(k_v)$ and $J^i_\frak{m}(k_v) \ne \emptyset$.
				\end{enumerate}
			\end{Theorem}

			\begin{proof}
\,\hfill
				\begin{enumerate}
					\item First note that $n$-coverings are $J[n]$-torsors. As in \cite[Section 2.2]{Skorobogatov}, the low degree terms of the Hochschild-Serre spectral sequence give an exact sequence
					\begin{equation*}\label{eq:HS}
						0 \to \HH^1(\Gal_k,J[n]) \to \HH^1_{\textup{\'et}}(J^i,J[n]) \to \HH^0(\Gal_k,\HH^1_{\textup{\'et}}(J^i_\kbar,J[n])) \stackrel{\partial}\to \HH^2(\Gal_k,J[n])\,.
					\end{equation*}
					There exists an $n$-covering of $J^i_\kbar$ and the image of its class under $\partial$ is the obstruction to the existence of an $n$-covering of $J^i$. This obstruction coincides with the coboundary of $[J^i]$ arising from the exact sequence $0 \to J[n] \to J \to J \to 0$ (see \cite[Lemma 2.4.5]{Skorobogatov}). In particular, if $[J^i]$ is divisible by $n$, then $\Cov^n(J^i) \ne \emptyset$. In this case $\HH^1(J[n])$ acts simply transitively on $\Cov^n(J^i)$ by exactness of the sequence above.
					\item It follows from geometric class field theory that the map $\Cov^n(J^1) \to \Cov^n(C)$ given by pullback is a bijection which respects the action of $\HH^1(J[n])$, so \reff{p1} $\Rightarrow$ \reff{p2}.
					\item 
					As in the proof of \reff{p1}, the condition $[J^i_\frak{m}] \in \varphi_*(\HH^1(A_\frak{m})) \subset \HH^1(J_\frak{m})$ ensures that $\Cov^\varphi(J^i_\frak{m})$ is nonempty, and hence is a principal homogeneous space for $\HH^1(A_\frak{m}[\varphi])$.
					The map $q : \Cov^\varphi(J^i_\frak{m}) \to \Cov^n(J^i)$ is a map of principal homogeneous spaces, compatible with the homomorphism $s: \HH^1(A_\frak{m}[\varphi]) \to \HH^1(J[n])$ coming from the cohomology of the exact sequence $1 \to T'[\varphi] \to A_\frak{m}[\varphi] \to J[n] \to 0$. The image of $q$ is $\Cov_\frak{m}^n(J^i)$, while the image of $s$ is $\ker(\Upsilon)$.

					\item This follows from \reff{p3} by pullback.
					\item If $J^i(k) \ne \emptyset$, then $\Cov^n_{\textup{sol}}(J^i)\ne \emptyset$ (since in this case $[J^i] = 0$ in $\HH^1(J)$ is divisible by $n$). The difference of any two soluble $n$-coverings has trivial image in $\HH^1(J)$, hence must lie in the image of the Kummer map $d:J(k)/nJ(k) \hookrightarrow \HH^1(J[n])$.
					\item By assumption $J^i_\frak{m}(k) \ne \emptyset$, so $\Cov_\textup{sol}^\varphi(J^i_\frak{m}) \ne \emptyset$. Then $q(\Cov_\textup{sol}^\varphi(J^i_\frak{m})) \subset \Cov_\frak{m}^n(J^i) \cap \Cov_\textup{sol}^n(J)$ is nonempty. The result now follows from \reff{p3} and \reff{p5} since $d(J(k)_\bullet) = d(J(k)/nJ(k)) \cap \ker(\Upsilon)$.

					\item Since $[J^i] \in n\Sha(J)$, we have that $\Sel^n(J^i) \ne \emptyset$. One then argues as in \reff{p5} (everywhere locally) to see that the difference of two locally soluble $n$-coverings of $J^i$ gives an element of $\Sel^n(J)$.
					\item First we claim that $\Cov^\varphi(J^i_\frak{m})$ and, hence, $\Cov_\frak{m}^n(J^i)$ are nonempty. As in the proof of \reff{p1}, there exists a $\varphi$-covering of $(J^i_\frak{m})_{\kbar}$ and the obstruction to Galois descent is an element $o \in \HH^2(A_\frak{m}[\varphi])$. There is an exact sequence $\Br(k)[n] = \HH^2(T'[\varphi]) \to \HH^2(A_\frak{m}[\varphi]) \to \HH^2(J[n])$ and the image of $o$ in $\HH^2(J[n])$ is the obstruction to the existence of an $n$-covering of $J^i$. We have assumed $[J^i] \in n\Sha(J)$, so $o$ is the image of an element from $\Br(k)$. However, $o$ must vanish everywhere locally since we have assumed $J^i_\frak{m}$ is locally soluble. So $o$ is trivial by the local-global principle for the Brauer group.

					Now suppose $(F,\pi) \in \Sel^n(J^i)$. By \reff{p1} there exists some $\xi \in \HH^1(J[n])$ such that the twist $\xi \cdot (F,\pi)$ lies in $\Cov_\frak{m}^n(J^i)$. We will show that $\xi \in \ker(\Upsilon)$ which, in light of \reff{p3}, shows that $(F,\pi) \in \Cov_\frak{m}^n(J^i)$. Thus, $\Sel^n(J^i) \subset \Cov_\frak{m}^n(J^i)$ and the conclusion of \reff{p8} follows from \reff{p7}.

					Let $v$ be a prime such that $J(k_v)_\bullet = J(k_v)$. Together \reff{p5} and \reff{p6} show that $\Cov_\textup{sol}^n(J^i_{k_v}) \subset \Cov_\frak{m}(J^i_{k_v})$. Since $\res_v(F,\pi) \in \Cov_\textup{sol}^n(J^i_{k_v})$, \reff{p3} implies that $\res_v(\xi) \in \ker(\Upsilon)$. Since $J(k_v)_\bullet = J(k_v)$ for all but at most one prime, the reciprocity law in the Brauer group gives that $\xi \in \ker(\Upsilon)$.
				\end{enumerate}
	
			\end{proof}

			\begin{Corollary}\label{cor:selC}
				$\Cov_\textup{sol}^n(C) \subset \Cov^n_\frak{m}(C)$ and if $k$ is a global field, then $\Sel^n(C) \subset \Cov_{\frak{m}}^n(C)$.
			\end{Corollary}

			\begin{proof}
				For the first statement we may assume $\Cov^n_\textup{sol}(C) \ne \emptyset$. Then $C(k) \ne \emptyset$ and, hence, $J^1_\frak{m}(k), J^1(k) \ne \emptyset$ and $J(k)_\bullet = J(k)$. So \reff{p5} and \reff{p6} show that $\Cov_\textup{sol}^n(J^1) \subset \Cov_\frak{m}^n(J^1)$. Then $\Cov_\textup{sol}^n(C) \subset p(\Cov^n_\textup{sol}(J^1)) \subset p(\Cov^n_\frak{m}(J^1)) = \Cov^n_\frak{m}(C)$. To prove the second statement we may assume $\Sel^n(C)$ is nonempty. Then the hypothesis of Theorem~\ref{thm:solublecoverings}\reff{p8} in case $i=1$ is satisfied, so this together with Theorem \reff{thm:solublecoverings}\reff{p7} shows that $\Sel^n(J^1) \subset \Cov_\frak{m}^n(J^1)$. The result now follows by applying the pullback map as in the proof of the first statement.
			\end{proof}

			\begin{Corollary}\label{cor:Sel}
				If $J(k)_\bullet = J(k)$ and $\Div^i(C) \ne \emptyset$, then $\Cov_\textup{sol}^n(J^i) \subset \Cov_\frak{m}^n(J^i)$. If $k$ is a global field and for all but at most one prime $v$ of $k$, $\Div^i(C_{k_v}) \ne \emptyset$ and $J(k_v)_\bullet = J(k_v)$. Then $\Sel^n(J^i) \subset \Cov_{\frak{m}}^n(J^i)$.
			\end{Corollary}
			
			\begin{proof}
				If $\Sel^n(J^i) = \emptyset$ there is nothing to prove. Otherwise, the hypotheses of \reff{p8} is satisfied since our assumption $\Div^i(C_{k_v}) \ne \emptyset$ implies $J^i_\frak{m}(k_v) \ne \emptyset$ by Lemma~\ref{lem:Jipoints}. Together \reff{p7} and \reff{p8} give the result. 
			\end{proof}

			\section{The descent setup}\label{sec:descentmap}
			We recall the following definition from \cite{BPS} (where the defined object is referred to as a \textit{fake descent setup}).
			
			\begin{Definition}\label{def:descentsetup}
				Let $C$ be a nice curve over $k$. A \defi{descent setup for $C$} is a triple $(n,\Delta,\beta)$ consisting of a positive integer $n$ not divisible by the characteristic of $k$, a nonempty finite \'etale $k$-scheme $\Delta = \Spec L$, and a divisor $\beta \in \Div(C\times \Delta)$ such that $n\beta = \frak{m}\times \Delta + \divv(f_\frak{m})$ for some $\frak{m} \in \Div(C)$ and $f_\frak{m} \in k(C\times \Delta)^\times$.
			\end{Definition}
			
			If the divisor $\frak{m}$ appearing in the definition is effective, reduced and base point free, then $(n,\frak{m})$ is a modulus setup, which we say is \defi{associated to $(n,\Delta,\beta)$}. For each $\delta \in \Delta(\kbar)$, $\beta_\delta \in \Div(C_\kbar)$ is a divisor such that $n\beta_\delta - \frak{m}$ is principal. So the class of $\beta_\delta$ in $\calJ$ lies in $\calJ[n]$. This gives rise to a map $\Res_\Delta \Z/n\Z \to \calJ[n]$ sending $\sum_{\delta \in \Delta(\kbar)}c_\delta$ to the class of $\sum c_\delta\beta_\delta$. There is trace map $\Tr:\Res_\Delta\Z/n\Z \to \Z/n\Z$ whose kernel we denote by $\Res^0_\Delta \Z/n\Z$. This fits into a commutative and exact diagram,

			\begin{equation}\label{diag:alphavee}
				\xymatrix{
					0 \ar[r] & \Res^0_\Delta\Z/n\Z \ar[d]\ar[r] & 
					\Res_\Delta\Z/n\Z \ar[d]\ar[r]^-\Tr & 
					\Z/n\Z  \ar@{=}[d] \ar[r] & 0\\ 
					0 \ar[r] & J[n] \ar[r] & \calJ[n] \ar[r]^-{\frac{1}{\ell}\deg} & \Z/n\Z \ar[r] & 0			
				}
			\end{equation}
		
			\begin{Definition}
			We say that $(n,\Delta,\beta)$ is an \defi{$n$-descent setup} if the vertical maps in~\eqref{diag:alphavee} are surjective and the divisors $\beta_\delta \in \Div(C_\kbar)$ are effective and have no common support.
			\end{Definition}

			We note that if $(n,\Delta,\beta)$ is an $n$-descent setup, then the divisor $\frak{m}$ appearing in the definition is base point free, as it is linearly equivalent to each of the $n\beta_\delta$, which by assumption have no common support. Thus $(n,\frak{m})$ is a modulus setup.

			The following examples show that all of the modulus setups considered in Section~\ref{sec:modulus} are associated to an $n$-descent setup. Details for \ref{ex:hyp2d} and \ref{ex:canonical2d} may be found in \cite[Examples 6.9]{BPS}, while \ref{ex:g1nd} is considered in \cite{CreutzMathComp}.
			
			\begin{enumerate}[label=\textbf{D.\arabic*},ref=Example D.\arabic*]
					\item\label{ex:hyp2d} Suppose $C$ is a double cover of $\PP^1$ which is not ramified over $\infty$. Let $\Delta(\kbar)$ be the set of ramification points and take $\beta$ to be the diagonal embedding of $\Delta$ in $C \times \Delta$. Then $(2,\Delta,\beta)$ is a $2$-descent setup. Taking $\frak{m}$ be the pullback of $\infty \in \Div(\PP^1)$ we recover the modulus setup in \ref{ex:M_doublecover}.
					\item\label{ex:g1nd} Suppose $C$ is a genus $1$ curve of degree $n$ in $\PP^{n-1}$ (or equivalently, a genus $1$ curve together with the linear equivalence class of a $k$-rational divisor of degree $n$). We obtain an $n$-descent setup by taking $\Delta$ to be the set of $n^2$ \defi{flex points} (i.e. points $x \in C(\kbar)$ such that $n.x$ is a hyperplance section) and $\beta$ to be the diagonal embedding of $\Delta$ in $C \times \Delta$. Taking $\frak{m}$ to be a generic hyperplane section recovers the modulus setup in \ref{ex:M_g1}.
					\item\label{ex:canonical2d} Suppose $C$ is a nonhyperelliptic curve of genus $\ge 2$. We obtain a $2$-descent setup for $C$ by taking $\Delta$ to be the $\Gal_k$-set of odd theta characteristics. A theta characteristic is a line bundle $\theta$ on $C$ whose square is the canonical bundle. By definition $\theta$ is odd if $h^0(X,\theta) \equiv 1 \bmod 2$, which implies in particular that $\theta$ may be represented by an effective divisor. By \cite[Proposition 5.8]{BPS} there is some effective $\beta \in \Div(C\times \Delta)$ such that $[\beta_\delta] = \delta$ for $\delta \in \Delta(\kbar)$. We can take $\frak{m}$ to be an effective canonical divisor and thus recover the modulus setup in \ref{ex:M_canonical}.	

				\end{enumerate}

\subsection{Descent maps}

		Let $C$ be a nice curve over $k$ with an $n$-descent setup $(n,\Delta,\beta)$ and associated modulus setup $(n,\frak{m})$. Let $L$ denote the \'etale algebra corresponding to $\Delta$, i.e., $\Delta = \Spec L$. Call a divisor on $C$ \defi{good} if its support is disjoint from $\frak{m}$ and all $\beta_\delta$.

		\begin{Lemma}\label{lem:deffm}
			Let $f_\frak{m} \in k(C\times\Delta)^\times$ be as in Definition~\ref{def:descentsetup}. Evaluation of $f_\frak{m}$ at good divisors induces homomorphisms
			\[
				f_{\frak{m}} : \Pic_{C_\frak{m}}(k) \to L^\times/L^{\times n}\,,\quad\text{and} \quad f_\frak{m} : \Pic(C) \To L^\times/k^\times L^{\times n}\,.
			\]
		\end{Lemma}

		\begin{proof}
			By Lemma~\ref{lem:Jipoints} and the moving lemma, all elements of $\Pic_{C_\frak{m}}(k)$ can be represented by a good $k$-rational divisor and $\Pic(C)$ is the image of $\Pic_{C_\frak{m}}(k)$. Suppose $D$ is a good $k$-rational divisor and $D = \divv(g)$ for some $g \in k(C)^\times$. By Weil reciprocity $f_\frak{m}(D) = g(\divv(f_\frak{m})) = g(n\beta-\frak{m}\times\Delta) = N(g|_\frak{m})^{-1}g(\beta)^n \in k^\times L^{\times n}$. This shows that the second map is well defined. If the class of $D$ is trivial in $\Pic_{C_\frak{m}}(k)$, then there is such a $g$ with $g|_\frak{m} = 1$ (cf. Lemma~\ref{lem:2.5}), so that $f_\frak{m}(D) \in L^{\times n}$.
		\end{proof}

	Dualizing~\eqref{diag:alphavee} and taking Galois cohomology yields a commutative and exact diagram,
		\begin{equation}\label{eq:descentsetup}
			\xymatrix{
				\HH^1(T'[\varphi]) \ar[r]\ar@{=}[d] 
				& \HH^1(A_\frak{m}[\varphi]) \ar[r]\ar[d]^{\alpha_\frak{m}}
				& \HH^1(J[n]) \ar[d]^-\alpha\ar[r]^\Upsilon
				& \HH^2(T'[\varphi])\ar@{=}[d]\\
				k^\times/k^{\times n} \ar[r]
				& L^\times/L^{\times n} \ar[r]
				& \HH^1\left(\frac{\Res_\Delta\mu_n}{\mu_n}\right)\ar[r]
				& \Br(k)[n]
			}
		\end{equation}

		This is related to the maps in Lemma~\ref{lem:deffm} and $\varphi$-coverings as follows.

		\begin{Theorem}\label{thm:descentmaps}
			For $i \in \{0,1\}$ there is an $\alpha_\frak{m}$-equivariant map $\alpha^i_\frak{m} : \Cov^\varphi(J^i_\frak{m}) \to L^\times/L^{\times n}$, functorial in $k$ and such that for any $(F_\frak{m},\pi) \in \Cov^\varphi(J^i_\frak{m})$ and $P \in \pi(F_\frak{m}(k))$ we have $\alpha_\frak{m}(F_\frak{m},\pi) = f_\frak{m}(P)$. 
		\end{Theorem}
		
		We prove the cases $i=0$ and $i=1$ separately below, after making some remarks and stating two corollaries that will be used in the following section. Proofs of the corollaries follow the proof of the theorem.	
		
		\begin{Remark}\label{rem:partition}
			The set of rational points on $J^i_\frak{m}$ may be partitioned as
			\[J^i_\frak{m}(k) = \coprod_{(F_\frak{m},\pi) \in \Cov^\varphi(J^i_\frak{m})} \pi(F_\frak{m}(k))\,.\]
			The theorem says that the map $f_\frak{m} : \Pic^i_{C_\frak{m}}(k) = J^i_\frak{m}(k) \to L^\times/L^{\times n}$ is constant on each of the sets appearing in this partition and that the value on each is equal to the image of the corresponding covering under $\alpha^i_\frak{m}$.			
		\end{Remark}

		\begin{Corollary}\label{cor:alphais}
			For $i \in \{0,1\}$ there is an $\alpha$-equivariant map $\alpha^i : \Cov^n_\frak{m}(J^i) \to L^\times/k^\times L^{\times n}$, functorial in $k$ and such that for any $(F,\pi) \in \Cov^n_\frak{m}(J^i)$ and $P \in \pi(F(k)) \cap \Pic^i(C)$ we have $\alpha^i(F,\pi) = f_\frak{m}(P)$. 
		\end{Corollary}

		\begin{Corollary}\label{cor:alphaC}
			There is an $\alpha$-equivariant map $\alpha^1 : \Cov^n_\frak{m}(C) \to L^\times/k^\times L^{\times n}$, functorial in $k$ and such that for any $(X,\pi) \in \Cov^n_\frak{m}(C)$ and $P \in \pi(X(k))$ we have $\alpha^1(X,\pi) = f_\frak{m}(P)$. 
		\end{Corollary}

		\begin{Remark}
			There are partitions of the sets of rational points
			\[
				C(k) = \coprod_{(X,\pi) \in \Cov^n(C)} \pi(X(k)) \quad\text{and}\quad J^i(k) = \coprod_{(F,\pi) \in \Cov^n(C)} \pi(F(k))
			\]
			By Corollary~\ref{cor:selC}, $\Cov_\textup{sol}^n(C) \subset \Cov_\frak{m}^n(C)$, so Corollary~\ref{cor:alphaC} says that the map $f_\frak{m} : C(k) \to L^\times/k^\times L^{\times n}$ is constant on each of the sets appearing in this partition and that the value on each is equal to the image of the corresponding covering under the descent map. A similar statement holds for $J^i$, provided $\Pic^i(C) = J^i(k)$, in which case $\Cov_\textup{sol}^n(J^i) \subset \Cov_\frak{m}^n(J^i)$ by Lemma~\ref{lem:CovsolJ} and Corollary~\ref{cor:Sel}.
		\end{Remark}
		
		\begin{Remark}
			The subgroup $J(k)_\bullet = \ker(\Upsilon \circ d) \subset J(k)$ is the largest subgroup of $J(k)$ on which one can define a homomorphism $f:J(k)_\bullet \to L^\times/k^\times L^{\times 2}$ such that $f$ agrees with $f_\frak{m}$ on $\Pic^0(C)$ and $f \circ d$ agrees with $\alpha^0 \circ d$ as in Corollary~\ref{cor:alphais}. This follows from a diagram chase in~\eqref{eq:descentsetup}. Corollary~\ref{Cor:Ups} shows that $J(k)_\bullet = J(k)$ when $C$ is a nonhyperelliptic curve defined over a local or global field and has $2$-descent setup as in~\ref{ex:canonical2d}. This is rather surprising given that it is not generally true that $J(k) = \Pic^0(C)$ and, moreover, that $J(k)_\bullet = \Pic^0(C)$ for a hyperelliptic curve with $2$-descent setup as in~\ref{ex:hyp2d} (see \cite[Corollary 10.6]{PoonenSchaefer}). It would be very interesting to determine how this extended map $f$ could be computed explicity over, say, a local field.
		\end{Remark}

		\begin{proof}[Proof of Theorem~\ref{thm:descentmaps} in the case $i = 0$]

			Let $d_\frak{m} : J_\frak{m}(k) \to \HH^1(A_\frak{m}[\varphi])$ denote the connecting homomorphism from the exact sequence $0 \to A_\frak{m}[\varphi] \to A_\frak{m} \to J_\frak{m} \to 0$. Under the identification $\HH^1(A_\frak{m}[\varphi]) = \Cov^\varphi(J_\frak{m})$, the coboundary map $d_\frak{m}$ sends $P \in J_\frak{m}(k)$ to the class of the $\varphi$-covering $A_\frak{m} \to J_\frak{m}$ given by $Q \mapsto \varphi(Q) + P$. So the following lemma proves Theorem~\ref{thm:descentmaps} in the case $i = 0$.
		\end{proof}

		\begin{Lemma}\label{lem:abc}
			The composition $J_\frak{m}(k) \stackrel{d_\frak{m}}\To \HH^1(A_\frak{m}[\varphi]) \stackrel{\alpha}{\To} L^\times/L^{\times n}$ is equal to $f_\frak{m}$.
		\end{Lemma}

		\begin{proof}
			Let $D \in \Div_\frak{m}(C)$ be a good divisor representing $P \in J_\frak{m}(k)$. Choose a good divisor $E \in \Div_\frak{m}(C_\kbar)$ such that $nE - D = \divv(g)$ for some $g \in \kbar(C_\kbar)^\times$ with $g|_\frak{m} = 1$. This is possible since $J_\frak{m}(\kbar)$ is a divisible group. Then $d_\frak{m}(P)$ is represented by the $1$-cocycle $\xi_\sigma = [{}^\sigma E - E] \in A_\frak{m}[\varphi]$. Note that $\divv({}^\sigma g/g) = n({}^\sigma E - E)$. The image of $\xi$ under $\alpha:\HH^1(A_\frak{m}[\varphi]) \to \HH^1(\Res_\Delta\mu_n)$ is represented by $e(\xi_\sigma,\beta)$, where $e$ is the pairing defined in Proposition~\ref{prop:WP2}. From the definition of the $e$ pairing we have
			\[
				e(\xi_\sigma,\beta) = f_\frak{m}({}^\sigma E-E)/({}^\sigma g/g)(\beta) = {}^\sigma b/b\,,
			\]
			where $b = f_\frak{m}(E)/g(\beta)$. Thus, the image of $\alpha(\xi)$ under $\HH^1(\Res_\Delta\mu_n) \simeq L^\times/L^{\times n}$ is represented by $b^n = f_\frak{m}(nE)/g(n\beta) = f_\frak{m}(D + \divv(g))/g(\frak{m}\times\Delta+ \divv(f_\frak{m})) = f_\frak{m}(D)/g(\frak{m}\times\Delta) = f_\frak{m}(D)$, where the last two equalities follow from Weil reciprocity and the fact that $g|_\frak{m} = 1$, respectively.
		\end{proof}

		\begin{proof}[Proof of Theorem~\ref{thm:descentmaps} in the case $i = 1$]
		Given $(F_\frak{m},\rho) \in \Cov^\varphi(J^1_\frak{m})$, let $(Y,\pi) \in \Cov^\varphi(C)$ be its image under the pullback map. As in the proof of Proposition~\ref{prop:cft} the extension $\kbar(Y_\kbar)$ contains $n$-th roots $g_\delta$ of $f_{\frak{m},\delta}$ for each $\delta \in \Delta(\kbar)$. Evidently $\divv({}^\sigma(g_\delta)) = \divv(g_{\sigma(\delta)})$ for any $\sigma \in \Gal_k$, so by Hilbert's Theorem 90 there is a function $h \in k(Y\times \Delta)^\times$ such that $\divv(g_\delta) = \divv(h_\delta)$. Then $\pi^*f_\frak{m}/h^n \in k(Y\times \Delta)^\times$ has trivial divisor, so must equal some constant function $c \in L^\times = k(\Delta)^\times$. The class of $c$ in $L^\times/ L^{\times n}$ is independent of the choice for $h$. Thus we have a well defined map $\alpha^1:\Cov^\varphi(J^1_\frak{m}) \to L^\times/L^{\times n}$ sending $(F_\frak{m},\rho)$ to the class of $c$.

Now suppose $D \in \Div^1(C)$ is a good divisor. For each $P \in C(\kbar)$ in the support of $D$ choose some $P' \in Y(\kbar)$ such that $\pi(P') = P$ and set $D' = \sum_P \ord_P(D)P' \in \Div(\Ybar)$. Let $\sigma \in \Gal_k$. Since $D$ is $k$-rational we can write ${}^\sigma(D') = \sum_P \ord_P(D)P'_\sigma$ with $\pi(P'_\sigma) = P$. The restriction of $\pi$ to the open subscheme $Y_0 = Y - \pi^{-1}(\frak{m})$ is an $A_\frak{m}[\varphi]$-torsor over $C_0 = C - \frak{m}$. Thus, for each $P \in \textup{Supp}(D)$ and $\sigma \in \Gal_k$ there is a unique $\gamma_{P,\sigma} \in A_\frak{m}[\varphi](\kbar)$ such that $\gamma_{P,\sigma}\cdot P' = P'_\sigma$. Set $\gamma_\sigma = \sum_P \ord_P(D)\gamma_{P,\sigma}$, which we interpret as a $1$-cocycle taking values in $A_\frak{m}[\varphi]$. Since $\pi(D') = D$, $\gamma_\sigma$ represents the class in $\HH^1(A_\frak{m}[\varphi])$ of the torsor $\rho^{-1}([D]) \subset F_\frak{m}$.

	From the relation defining $c$ we have $f_\frak{m}(D)/c = f_\frak{m}\circ \pi(D')/c = h(D')^n$. This represents a class in $\HH^1(\Res_\Delta\mu_n)$ given by the $1$-cocyle
	\[
		\eta_\sigma = {}^\sigma (h(D'))/h(D') = h({}^\sigma D')/h(D') = \prod_P [h(\gamma_{P,\sigma}\cdot P')/h(P')]^{\ord_P(D)}\,.
	\]
	By Lemma~\ref{lem:WP} this can be expressed in terms of the extended Weil pairing of Proposition~\ref{prop:WP2} as
	\[
		\eta_\sigma = \prod_P e(\gamma_{P,\sigma},\beta)^{\ord_P(D)} = e\left(\gamma_\sigma,\beta\right)\,.
	\] 	
	This also represents the image of $\gamma_\sigma$ under the map $\alpha : \HH^1(A_\frak{m}[\varphi]) \to \HH^1(\Res_\Delta\mu_n)$. Thus, $f_\frak{m}(D)/c = f_\frak{m}(D)/\alpha^1(F_\frak{m},\rho)$ is equal to $\alpha(\rho^{-1}([D]))$. In particular, $f_\frak{m}(D) = \alpha^1(F_\frak{m},\rho)$ whenever the fiber $\rho^{-1}([D])$ contains a $k$-point. This is the property stated in the theorem.

	Let us show that $\alpha^1_\frak{m}$ is $\alpha_\frak{m}$-equivariant. Suppose $(Y_\xi,\pi_\xi)$ is the twist of $(Y,\pi)$ by $\xi \in \HH^1(k,A_\frak{m}[\varphi])$. By definition there is an isomorphism $\psi :  (Y_\xi)_\kbar \to Y_\kbar$ such that $\pi \circ \psi = \pi_\xi$ and ${ }^{\sigma}\psi(x) = \xi_\sigma\cdot \psi(x)$ for any $\sigma \in \Gal_k$, where $\xi_\sigma \in A_\frak{m}[\varphi] \simeq \Aut(Y_\kbar/C_\kbar)$. Let $h,c$ and $h_\xi,c_\xi$ be as in the definition of $\alpha_\frak{m}^1$. We must show that $c_\xi/c$ and $\alpha(\xi)$ give the same class in $L^\times/L^{\times n} \simeq \HH^1(\Res_\Delta\mu_n)$. We have $c_\xi h_\xi^n= \pi_\xi^*f_\frak{m} = (\pi \circ \psi)^*f_\frak{m} = c (h \circ \psi)^n\,.$ Thus, $c_\xi/c = (h(\psi(Q))/h_\xi(Q))^n$, for any $Q \in Y_\xi(\kbar)$ where this expression is defined and nonzero. So the class of $c_\xi/c$ in $L^\times/L^{\times n} \simeq \HH^1(\Res_\Delta \mu_n)$ is represented by the $1$-cocycle
	\begin{align*}
		\nu_\sigma 
		&= {}^\sigma\left(\frac{h(\psi(Q))}{h_\xi(Q)}\right)\left(\frac{h_\xi(Q)}{h(\psi(Q))}\right)
		= \left(\frac{h(\xi_\sigma\cdot\psi({}^\sigma Q))}{h(\psi(Q))}\right)\left(\frac{h_\xi(Q)}{h_\xi({}^\sigma Q)}\right)\\
		&= \left(\frac{h(\xi_\sigma\cdot\psi({}^\sigma Q))}{h(\psi({}^\sigma Q))}\right)\underbrace{\left(\frac{h(\psi({}^\sigma Q))}{h_\xi({}^\sigma Q)}\right)}_{c_\xi/c}\underbrace{\left(\frac{h_\xi(Q)}{{h(\psi(Q))}}\right)}_{c/c_\xi} = \left(\frac{h(\xi_\sigma\cdot\psi({}^\sigma Q))}{h(\psi({}^\sigma Q))}\right)\\
		&= e(\xi_\sigma,\beta) = \alpha(\xi_\sigma)\,,
	\end{align*}
	where the final line follows from Lemma~\ref{lem:WP} as above.
\end{proof}

		\begin{proof}[Proof of Corollary~\ref{cor:alphais}]
			If two elements of $\Cov^\varphi(J^i_\frak{m})$ have the same image in $\Cov_\frak{m}^n(J^i)$, then their images under $\alpha_\frak{m}^i$ differ by an element in $k^\times/(L^{\times n} \cap k^\times)$. This follows from the exactness of~\eqref{eq:descentsetup} and $\alpha_\frak{m}$-equivariance of $\alpha^i_\frak{m}$. Thus there is a unique map $\alpha^i$ fitting into the commutative diagram
			\[
		\xymatrix{
			J_\frak{m}^i(k) \ar[r]^{d_\frak{m}} \ar[d]^s & \Cov^\varphi(J^i_\frak{m}) \ar[r]^{\alpha_\frak{m}^i}\ar[d]^q & L^\times/L^{\times n} \ar[d] \\
			\Pic^i(C) \ar[r]^d & \Cov^n_\frak{m}(J^i) \ar[r]^{\alpha^i} & L^\times/k^\times L^{\times n}
		}
		\]
		Here the maps $d_\frak{m}$ and $d$ are defined by $d_\frak{m}(P) = [A_\frak{m} \ni Q \mapsto \varphi(Q) + P \in J^i]$ and $d(P) = [ J \ni Q \mapsto nQ + P \in J^i ]$. Note that in the case $i = 0$ these agree with the usual connecting homomorphisms. The map $s$ is induced by the canonical map $\Pic_{C_\frak{m}}(k) \to \Pic_C(k)$, and is surjective by Lemma \ref{lem:Jipoints}. Theorem~\ref{thm:descentmaps} shows that the composition along the top row is the map $f_\frak{m}$ of Lemma~\ref{lem:deffm}. Hence, the same is true of the bottom row. Thus $\alpha^i$ has all of the required properties.
\end{proof}

\begin{proof}[Proof of Corollary~\ref{cor:alphaC}]
	The pullback map $p : \Cov^n_\frak{m}(J^1) \to \Cov^n_\frak{m}(C)$ is a bijection by Lemma \ref{lem:defm}. Define $\alpha^1(X,\pi) = \alpha^1(F,\pi)$ where $(X,\pi)$ is the pullback of $(F,\pi)$. The required properties follow immediately from Corollary~\ref{cor:alphais}.
\end{proof}

		\section{Fake Selmer sets}
		
			When $k$ is a global field with completions $k_v$ the map $f_\frak{m}$ induces a commutative diagram,
			\begin{equation}\label{eq:seldiagram}
				\xymatrix{
					\Pic(C) \ar[rr]^-{f_\frak{m}} \ar[d] && \frac{L^\times}{k^\times L^{\times n}} \ar[d]^{\prod \res_v}\\
					\prod_v \Pic(C_{k_v}) \ar[rr]^-{\prod f_{\frak{m},v}}&& \prod \frac{(L\otimes k_v)^\times}{k_v^\times (L\otimes k_v)^{\times n}}
				}
			\end{equation}
			
			\begin{Definition}\label{def:selfaked}
				Suppose $k$ is a global field.
				For any integer $i$, the \defi{fake Selmer set of $J^i$} is the set
				\begin{equation*}
				\Sel^{f_\frak{m}}_\textup{fake}(J^i) := \left\{ \, l \in L^{\times}/k^\times L^{\times n} \;:\; \res_v(l) \in f_{\frak{m},v}(\Pic^i(C_{k_v}))\text{, for all $v$ }\right\}\,.
				\end{equation*}
				The \defi{fake Selmer set of $C$} is the set 
				\begin{equation*}
				\Sel^{f_\frak{m}}_\textup{fake}(C) := \left\{ \, l \in L^{\times}/k^\times L^{\times n} \;:\; \res_v(l) \in f_{\frak{m},v}(C(k_v))\text{, for all $v$ }\right\}\,.
			\end{equation*}
			\end{Definition}
			
\begin{Theorem}\label{thm:selfakeC}
				Suppose $C$ is defined over a global field. If $\Sel^{f_\frak{m}}_\textup{fake}(C) = \emptyset$, then $\Sel^n(C) = \emptyset$.
			\end{Theorem}

			\begin{proof}
				By Corollary~\ref{cor:selC}, $\Cov_\textup{sol}^n(C_{k_v}) \subset \Cov_\frak{m}^n(C_{k_v})$ for each $v$ and $\Sel^n(C) \subset \Cov_\frak{m}^n(C)$. By Corollary~\ref{cor:alphaC} we have $f_\frak{m}(C(k_v)) = \alpha^1(\Cov_\textup{sol}^n(C_{k_v}))$. Thus $\alpha^1(\Sel^n(C)) \subset \Sel^{f_\frak{m}}_\textup{fake}(C)$. 
			\end{proof}

 			\begin{Theorem}\label{thm:selfakeJ1}
				Suppose $C$ is defined over a global field and $\Div^1(C_{k_v}) \ne \emptyset$ for all primes $v$ of $k$. If $\Sel^{f_\frak{m}}_\textup{fake}(J^1) = \emptyset$, then $\Sel^n(J^1) = \emptyset$.
			\end{Theorem}

			\begin{proof}
				As noted in the proof of Lemma~\ref{lem:upsiloniota} we have $\Theta_C(x) = \langle x, [J^1] \rangle$. So the assumption on $\Div^1(C_{k_v})$ implies that $\Pic^0(C_{k_v}) = J(k_v)_\bullet = J(k_v)$.  Thus the hypothesis of Corollary~\ref{cor:Sel} is satisfied and so $\Sel^n(J^1) \subset \Cov_\frak{m}^n(J^1)$. The property of $\alpha^1$ given in Theorem~\ref{thm:descentmaps} together with Corollary~\ref{cor:Sel} gives that $f_{\frak{m},v}(\Pic^1(C_{k_v})) = \alpha^1(\Cov_\textup{sol}^n(J^1_{k_v}))$. It follows that  $\alpha^1(\Sel^n(J)) \subset \Sel^{f_\frak{m}}_\textup{fake}(J^1)$, which gives the result.
			\end{proof}
			
			\begin{Remark}
				The conclusion of the theorem implies that $J^1$ represents a nontrivial element in $\Sha(J)/n\Sha(J)$, not just in $\Sha(J)$. Together with well known properties of the Cassels-Tate pairing this allows one to deduce better lower bounds for $\Sha(J)$ and hence better upper bounds for the rank of $J(k)$. This is illustrated in the example in Section~\ref{sec:Examples}.
			\end{Remark}
			
			\begin{Remark}				  
				When $C$ has a descent setup as in \ref{ex:hyp2d} and \ref{ex:g1nd} a proof of Theorem~\ref{thm:selfakeJ1} can be found in \cite[Prop. 5.4]{CreutzANTSX} and \cite[Theorem 5.2]{CreutzMathComp}, respectively.	
			  \end{Remark}

			\subsection{Descent on $J$}
The results of~\cite[Section 10]{BPS} show that from knowledge of $\Sel^{f_\frak{m}}_\textup{fake}(J)$ one can often determine $\Sel^n(J)$. For this to work one must at least have that $\alpha(\Sel^n(J))$ is contained in the image of $L^\times/k^\times L^{\times n}$ (cf.~\eqref{eq:descentsetup}) or, equivalently, $\Sel^n(J) \subset \ker(\Upsilon) = \Cov_\frak{m}^n(J)$. This can be ensured by imposing hypotheses on $C$ such as \cite[Hypothesis 10.1]{BPS} that the map $\Pic^0(C) \to J(k)/nJ(k)$ is surjective both globally and locally. The results of Section $2$ allow us to extract information concerning $\Sel^n(J)$ in a number of cases where \cite[Hypothesis 10.1]{BPS} does not hold.

\begin{Theorem}\label{thm:descentJ}
	Suppose $C$ is defined over a global field $k$ and $J^2(k) \ne \emptyset$. Let $N$ be the number of primes $v$ such that $\coker(\Pic^0(C_{k_v}) \to J(k_v)/2J(k_v))\ne 0$. Suppose that either of the following holds
	\begin{enumerate}
		\item $C$ is a nonhyperelliptic curve with a $2$-descent setup as in~\ref{ex:canonical2d}, or
		\item $C$ is a hyperelliptic curve with a $2$-descent setup as in~\ref{ex:hyp2d} and $N \le 1$.
	\end{enumerate}
	Then 
	\[
		\dim_{\F_2}(\alpha(\Sel^2(J))) \le \dim_{\F_2}\Sel^{f_\frak{m}}_\textup{fake}(J) + \max\{ 0, N - 1 \}\,.
	\]
\end{Theorem}

\begin{Remark}
	The kernel of $\alpha : \HH^1(J[2]) \to \HH^1(\Res_\Delta\mu_2/\mu_2)$ can be computed from the Galois action on $\Delta$, thus allowing us to extract upper bounds for $\dim_{\F_2}(\Sel^2(J))$ as well.
\end{Remark}

	\begin{proof}
		For each prime $v$, let $M_v := \coker(\Pic^0(C_{k_v}) \to J(k_v)/2J(k_v))$ and let $T$ be the (finite) set of primes where $M_v$ is nontrivial. In the nonhyperelliptic case we have $\ell = g-1$ by Corollary~\ref{Cor:Ups}, so $J(k_v)_\bullet = J(k_v)$ for all primes $v$. In the hyperelliptic case the assumption $N \le 1$ implies $J(k_v)_\bullet = J(k_v)$ fails for at most one prime $v$. In both cases $\Sel^n(J) \subset \Cov_\frak{m}^n(J)$ by Corollary~\ref{cor:selJ}.
		
		For $v \not\in T$ we have $\Pic^0(C_{k_v}) = J(k_v)_\bullet = J(k_v)$ and $f_\frak{m}(\Pic^0(C_{k_v})) = \alpha^0(\Cov^n_\textup{sol}(J_{k_v}))$ by Lemma~\ref{lem:CovsolJ} and Theorem~\ref{cor:alphais}. So if $T = \emptyset$, then we have $\alpha(\Sel^n(J)) \subset \Sel^{f_\frak{m}}_\textup{fake}(J)$ and the result holds.

		Let use assume $N = \# T > 0$. Let $K_v := \alpha(d(J(k_v)))$, $\Lambda_v := f_\frak{m}(\Pic^0(C_{k_v}))$. Identifying $J(k_v)/2J(k_v)$ with its image under $d$ and using Lemma~\ref{lem:abc} we obtain a commutative diagram of $\F_2$-linear maps
		\[
			\xymatrix{
				\Sel^2(J) \ar[r]\ar[d]^\alpha & \bigoplus_{v \in T} J(k_v)/2J(k_v) \ar[r]\ar[d]^\alpha & \bigoplus_{v \in T}M_v \ar[d]^\alpha \\
				\alpha(\Sel^2(J)) \ar[r] & \bigoplus_{v \in T}K_v \ar[r]& \bigoplus_{v \in T} K_v/\Lambda_v 
			}
		\]
		The maps $\Theta_{C_{k_v}}$ of~\eqref{eq:defThetaC} induce an isomorphism $\oplus M_v \to \oplus \Br(k_v)[2] \simeq \F_2^N$. Since $J^2(k) \ne \emptyset$, $[J^1] \in \HH^1(J)[2]$. Therefore, there is a lift $\eta$ of $[J^1]$ to $\HH^1(J[2])$. Given $\xi \in \Sel^2(J)$ let $b = \xi \cup_e \eta \in \Br(k)$ and for $v \in T$ let $x_v \in J(k_v)$ be such that $d(x_v) = \res_v(\xi)$. Compatibility of the Tate pairing with the Weil pairing cup product (as noted in the proof of Lemma~\ref{lem:upsiloniota}) gives $\res_v(b) = \langle d(x_v),[(J^1)_{k_v}]\rangle = \Theta_{C_{k_v}}(x_v)$. Therefore global reciprocity in $\Br(k)$ implies that the image of $\Sel^2(J)$ in $\bigoplus M_v \simeq \F_2^N$ is contained in a hyperplane. Since the vertical map on the right is surjective, this shows that the rank of the composition along the bottom row of the diagram is at most $N-1$. On the other hand, the kernel is $\Sel^{f_\frak{m}}_\textup{fake}(J)$.
	\end{proof}

		Here is an instance where we can prove that $J(k_v) \ne \Pic^0(C_{k_v})$.
		\begin{Lemma}\label{lem:instance}
			Suppose that $C$ is a curve of genus $g$ and either
			\begin{enumerate}
				\item $k$ is a local field such that $J^1(k) = \emptyset$, or
				\item $C$ is defined over a global field $K$ and $k = K_v$ is the unique completion of $K$ such that $\Pic^1(C_k) = \emptyset$. Assume further that $\Div^{g-1}(C_{K_v}) \ne \emptyset$ for all primes $v$.
			\end{enumerate}
			Then $\coker(\Pic^0(C_k) \to J(k)/2J(k)) \ne 0$.
		\end{Lemma}
		
		\begin{proof}
			The map $\Theta_C$ is related to the Tate pairing by the rule $\Theta_C(x) = \langle x, [J^1]\rangle$. The assumption in $(1)$ is that $[J^1]$ is nontrivial in $\HH^1(J)$, so the result follows from nondegeneracy of the Tate pairing. In case (2), the second assumption implies that the Cassels-Tate pairing is alternating by \cite[Corollary 11]{PoonenStoll}. If $J^1(k) \ne \emptyset$, then $J^1 \in \Sha(J)$ and \cite[Theorem 11]{PoonenStoll} shows that $J^1$ pairs nontrivially with itself, a contradiction. Hence the hypothesis of (1) is satisfied.
		\end{proof}

\section{Examples}\label{sec:examples}

	Computations in this section were performed with the Magma Computer Algebra System described in \cite{magma}.

	\subsection{Example of explicit descent on $J^1$}\label{sec:Examples}
		 	
		 		\begin{Theorem}\label{thm:example}
		 			Let $C$ denote the genus $3$ curve in $\PP^2_\Q$ given by the vanishing of
		 			\[
		 				x^4 + 5x^3y + 9x^3z + 9x^2y^2 + 9x^2yz + xy^3 - 8xy^2z - 8xz^3 - 
    6y^4 - 3y^3z - 8y^2z^2 - 2yz^3 - 3z^4\,
		 			\]
		 			and let $J$ be the Jacobian of $C$. Then, assuming the generalized Riemann hypothesis, $J(\Q) \simeq \Z$ and $\Sha(J)[2^\infty] \simeq \Z/2\Z\times \Z/2\Z$. Furthermore, the curve $C$ has points everywhere locally, but has no $\Q$-rational divisors of odd degree.
		 		\end{Theorem}
		 		
		 		\begin{Remark}
		 			There are examples of smooth plane quartics having points everywhere locally, but no rational divisors of odd degree given in \cite{Bremner}. These examples exploit the fact that the plane quartic in question admits a finite morphism to a genus $1$ curve. As the Jacobian of the curve in Theorem~\ref{thm:example} is absolutely simple, such techniques do not apply.
		 		\end{Remark}
		 	
		 		\begin{proof}[Proof of Theorem~\ref{thm:example}]
		 			$C$ has real points and the polynomial defining $C$ has good reduction at all primes other than $q = 760567$. The point $(0:1948:1) \in C(\F_q)$ is smooth, and for all other primes $p$, $C(\F_p) \ne \emptyset$ (for $p > 37$ this follows from the Weil bounds). So by Hensel's lemma $C$ and, hence, $J^1$ have points everywhere locally. This implies that $\Pic^d(C) = J^d(\Q)$ and $\Pic^d(C_{\Q_p}) = J^d(\Q_p)$ for all primes $p$ and $d \ge 0$.
		 			
		 			Using Magma we compute that $|J(\F_2)| = 25$ and $|J(\F_3)| = 57$. Since these orders are relatively prime, we have $J(\Q)_\textup{tors} = 0$. A search for points of small height on $C$ over $\Q(\sqrt{2})$ yields
		 			\[
		 				D_1 = (\sqrt{2}-2:-\sqrt{2}+1:1) \quad \text{and} \quad D_2 = (-1:\sqrt{2}/2:1)
		 			\]
		 			Then $D = \Tr_{\Q(\sqrt{2})/\Q}(D_1 - D_2)$ is a $\Q$-rational divisor of degree $0$ on $C$ representing a point $P \in J(\Q)$. The image of $P$ under the reduction map $J(\Q) \to J(\F_7)$ is nontrivial. So $P$ has infinite order and, hence, $J(\Q)$ has rank at least $1$.
		 			
		 			To proceed further we compute $\Sel^{f_\frak{m}}_\textup{fake}(J^0)$ and $\Sel^{f_\frak{m}}_\textup{fake}(J^1)$ for the descent setup and modulus setup as in \ref{ex:canonical2d} and \ref{ex:M_canonical}, taking $\Delta$ to be the set of bitangents to $C$ and $\beta$ to be the diagonal embedding of $\Delta$ into $\Div(\Cbar) \times \Delta$. The algebra $L$ has degree $28$ and, moreover, its Galois group is isomorphic to $\operatorname{GSp}_6(\F_2)$, showing that the representation $\Gal_{\Q} \to \operatorname{GSp}(J[2])$ is surjective. A Magma computation (assuming GRH) gives that $\mathcal{O}_L$ has trivial class group. The function $f_{\frak{m}}$ can be written as a ratio of linear forms $f_\frak{m} = l/l_0$, with $l \in L[x,y,z]$ and $l_0 \in \Q[x,y,z]$. Since $\mathcal{O}_L$ has trivial class group, we can scale $l$ by an element of $L^\times$ such that the coefficients of $\ell$ are integral and generate the unit ideal in $\mathcal{O}_L$. 
		 			
		 			By \cite[Theorem 10.9]{BPS}, $\Sel_\textup{fake}^{f_\frak{m}}(J)$ is contained in $L(\calS,2)$, the unramified outside $\calS$ subgroup of $L^\times/\Q^\times L^{\times 2}$ for $\calS = \{2, 760567, \infty\}$. Since $L$ has class number $1$, we can determine representatives in $L^\times$ for $L(\calS,2)$ from the $\calS$-unit group of $L$ (cf. \cite[Proposition 7.3]{BPS}). The order of $J(\Q_p)/2J(\Q_p)$ can be computed from the splitting of $p$ in $L$. This gives an upper bound for the size of the image of $J(\Q_p)$ under $f_\frak{m}$.  For both nonarchimedean primes $p \in \calS$, the differences of images of points in $C(\Q_p)$ already generate a subgroup whose order meets the upper bound, hence must be the image of $J(\Q_p)$. The subgroup of $L(S,2)$ mapping into the images of $J(\Q_p)$ for $p \in \calS$ has $\F_2$-dimension $3$ and contains $\Sel^{f_\frak{m}}_\textup{fake}(J)$. Since the representation $\Gal_{\Q} \to \operatorname{GSp}(J[2])$ is surjective, \cite[Theorem 10.14]{BPS} gives the inequality $\dim_{\F_2}\Sel^2(J) \le \dim_{\F_2}\Sel_\textup{fake}(J) \le 3$.
		 					 			
					The local image $f_\frak{m}(C(\Q_p))$ is unramified for $p$ outside $\calS$ by \cite[Lemma 12.13]{BPS}. Since $f_\frak{m}$ is a homomorphism, the local image $f_\frak{m}(J^1(\Q_p))$ is the coset of $f_\frak{m}(J(\Q_p))$ containing $f_\frak{m}(C(\Q_p))$. It follows that $\Sel_\textup{fake}^{f_\frak{m}}(J^1) \subset L(\calS,2)$. Moreover, $f_\frak{m}(J^1(\Q_p))$ for $p \in \calS$ are easily obtained by translating the $f_\frak{m}(J(\Q_p))$ already computed. It turns out that the image of $L(\calS,2)$ in $(L\otimes \Q_2)^\times/\Q_2^\times (L\otimes \Q_2)^{\times 2}$ does not intersect $f_\frak{m}(J^1(\Q_2))$. Hence, $\Sel^{f_\frak{m}}_\textup{fake}(J^1) = \emptyset$. By Theorem~\ref{thm:selfakeJ1} we have $\Sel^2(J^1) = \emptyset$. In particular, the computation shows that there are no $2$-coverings of $J^1$ with $\Q_2$-points and $\Q_p$-points for all $p$ outside $\calS$.

		 			Since $C$ has points everywhere locally, the  Cassels-Tate pairing on $\Sha(J)$ is alternating by \cite[Corollary 12]{PoonenStoll}. It induces a nondegenerate alternating pairing on the finite group $\frac{\Sha(J)[2]}{2\Sha(J)[4]}$, which consequently has square order (see, for example, \cite[Corollary 4.6]{CreutzANTSX}). The $\Q(\sqrt{2})$-points on $C$ above show that $J^2(\Q) \ne \emptyset$. Since $2[J^1] = [J^2]$ in $\Sha(J)$, we conclude that $[J^1] \in \Sha(J)[2]$. The fact that $\Sel^2(J^1) = \emptyset$ implies, moreover, that $[J^1]$ gives a nontrivial element of $\frac{\Sha(J)[2]}{2\Sha(J)[4]}$ (cf. Theorem~\ref{thm:solublecoverings}\eqref{p7}). We conclude that $\Sha(J)[2^\infty]$ admits a direct summand isomorphic to $\Z/2\Z\times \Z/2\Z$. From the exact sequence
		 				\[
		 					0 \to J(\Q)/2J(\Q) \to \Sel^2(J) \to \Sha(J)[2] \to 0
		 				\]
		 				we therefore obtain that $J(\Q) \simeq \Z$ and $\Sha(J)[2^\infty] \simeq \Z/2\Z\times \Z/2\Z$.
		 		\end{proof}

\subsection{Descent on $J$}

	The following theorem gives an example where we compute $\Sel^2(J)$, despite the fact that \cite[Hypothesis 10.1]{BPS} does not hold.
		
		\begin{Theorem}\label{thm:descentJexample}
			The genus $3$ curve $C \subset \PP^2_\Q$ defined by the vanishing of
			\[
			x^4 + 2x^3y + 2x^3z + 4x^2y^2 + 2x^2yz + 4x^2z^2 + 3xy^3 + 
    2xy^2z + 4xyz^2 + 3xz^3 + 2y^4 + 5y^2z^2 + yz^3 + 2z^4\,
			\]			
			has the following properties.
			\begin{enumerate}
				\item $C(\Q_p) \ne \emptyset$ for all $p \ne 3,\infty$.
				\item $\Pic^{\textup{odd}}(C_{\Q_3}) = \emptyset$. 
				\item $\Pic^{\textup{odd}}(C_\R) = \emptyset$. 
				\item $J = \Jac(C)$ has $\textup{rank}(J(\Q)) = 1$.
			\end{enumerate}
		\end{Theorem}

		\begin{proof}
			The verification of (1) is straightforward. To show $\Pic^{\textup{odd}}(C_{\Q_p}) = \emptyset$ for $p = 3,\infty$ it suffices to check that $C$ has no points over $\Q_p$ or any extension of $\Q_p$ of degree $3$. For this we simply list the finitely many extensions and check locally solubility over each.
			We use the descent setup and modulus setup as in \ref{ex:canonical2d} and \ref{ex:M_canonical}, taking $\Delta$ to be the set of bitangents to $C$ and $\beta$ to be the diagonal embedding of $\Delta$ into $\Div(\Cbar) \times \Delta$. The algebra $L$ has degree $28$ and splits as a product of $2$ quadratic fields (both isomorphic to $\Q(\sqrt{-15})$) and $3$ octic fields. Let $R^\vee = \coker(J[2] \to \Res_\Delta\mu_2/\mu_2)$. As described in \cite[12.6.6]{BPS} we compute the Galois action on the bitangents, from which we find that $\dim_{\F_2}J[2](\Q)  = \dim_{\F_2} R^\vee(\Q) = 2$ and $\dim_{\F_2}(\Res_\Delta\mu_2/\mu_2)(\Q)= 4$. From the exact sequence
			\[
				0 \to J[2](\Q)
 \to (\Res_\Delta\mu_2/\mu_2)(\Q) \to R^\vee(\Q) \to \HH^1(\Q,J[2]) \stackrel{\alpha}\to \HH^1(\Q,\Res_\Delta\mu_2/\mu_2)
			\]
			we conclude that $\alpha$ is injective over $\Q$.
			
			There are points $P_1 = (\eta:1:0),\, P_2 = (\eta:0:1) \in C(\Q(\eta))$, where $\eta$ is a primitive cube root of unity. The torsion subgroup of $J(\Q)$ is $2$-primary, as can be seen by computing $\#J(\F_p)$ for small primes of good reduction. The divisor $D := \Tr_{\Q(\eta)/\Q}(P_1  - P_2)$ represents a point $[D] \in J(\Q)$ which maps to an element of order $18$ in $J(\F_7)$, showing that $[D]$ has infinite order. Thus we have a lower bound $3 \le \dim_{\F_2}\Sel^2(J)$. Moreover, the points $P_i$ show that $\Div^2(C) \ne \emptyset$ and so the conditions of Theorem~\ref{thm:descentJ} are satisfied.

			The curve $C$ has good reduction outside $\calS_1 := \{ 3,5,1613\}$ so by \cite[Theorem 10.9]{BPS}, $\Sel_\textup{fake}^{f_\frak{m}}(J)$ is contained in $L(\calS,2)$, the unramified outside $\calS$ subgroup of $L^\times/k^\times L^{\times 2}$ for $\calS = \{2,3,5,1613, \infty\}$. We compute $L(\calS,2)$ as described in \cite[Proposition 7.3]{BPS}. Since the largest discriminant of a factor of $L$ is of order $10^{28}$, this can be done without assuming GRH. For $p \in \calT = \{ 2,5,1613\}$ we compute the local images $f_\frak{m}(\Pic^0(C_{\Q_p})) = f_\frak{m}(J(\Q_p))$ following the strategy of \cite[Remark 11.6]{BPS} (i.e., compute the images of random points until the dimension of the subgroup they generate meets an upper bound determined in advance from the action of the decomposition group on the bitangents). The subgroup $S_\calT \subset L(\calS,2)$ satisfying these local conditions at all primes in $\calT$ has dimension $5$.
			
			From the action of the decomposition group at $p=3$ on the bitangents we determine that $J(\Q_3)/2J(\Q_3)$ and its image under $\alpha \circ d$ have dimension $3$. However, computing the images of differences of random elements of $\Pic^2(C_{\Q_3})$ under $f_\frak{m}$ we are only able to generate a subgroup $H_3$ of dimension $2$. The subgroup $S_{\calT,H_3}$ of $S_\calT$ restricting to $H_3$ has dimension $2$.
			
			We now consider two cases. If the map $\Pic^0(C_{\Q_3})\to J(\Q_3)/2J(\Q_3)$ is surjective, then $H_3$ has codimension $1$ in $f_\frak{m}(\Pic^0(C_{\Q_3}))$, so $\dim_{\F_2} \Sel_\textup{fake}^{f_\frak{m}}(J) \le \dim_{\F_2} S_{\calT,H_3} + 1 = 3$ and Theorem~\ref{thm:descentJ} applies with $N \le 1$ to give $\dim_{\F_2} \Sel^2(J) \le \dim_{\F_2} \Sel^{f_\frak{m}}_\textup{fake}(J) \le 3$. If the map $\Pic^0(C_{\Q_3})\to J(\Q_3)/2J(\Q_3)$ is not surjective, then $H_3 = f_\frak{m}(\Pic^0(C_{\Q_3}))$, so $\dim_{\F_2} \Sel_\textup{fake}^{f_\frak{m}}(J) \le \dim_{\F_2} S_{\calT,H_3} = 2$ and Theorem~\ref{thm:descentJ} applies with $N \le 2$ to give the upper bound $\dim_{\F_2} \Sel^2(J) \le \dim_{\F_2} \Sel^{f_\frak{m}}_\textup{fake}(J) + 1 \le 3$.
			
			In either case we have the upper bound $\dim_{\F_2} \Sel^2(J) \le 3$ which coincides with the lower bound obtained from the point search. Thus $\textup{rank}(J(\Q)) = 1$.
		\end{proof}

\begin{Remark}
	The computation outlined in the proof above shows that the maps $\Pic^0(C_{\Q_p})\to J(\Q_p)/2J(\Q_p)$ are either surjective for all $p$, or fail to be surjective for both $p = 3$ and $p = \infty$. In fact the latter is the case. To prove this one one can compute $f_\frak{m}(\Pic^0(C_{\Q_p}))$ algorithmically as described in \cite[Section 11.1]{BPS} for either $p = 3$ or $p = \infty$. This shows that the set $S_{\calT,H_3} \simeq \Z/2\Z\times \Z/2\Z$ computed is equal to $\Sel^{f_\frak{m}}_\textup{fake}(J)$. Since $J(\Q)/2J(\Q)$ has dimension $3$ and injects into $\Sel^2(J)$ we conclude that $J(\Q) \ne \Pic^0(C)$.
\end{Remark}

	\section{The set $\Cov_{\frak{m}}^n(C)$ for genus $1$ and hyperelliptic curves}
		Suppose $C$ is a nice curve over $k$ with a modulus setup $(n,\frak{m})$ associated to an $n$-descent setup. In this section we show how the sets $\Cov_{\frak{m}}^n(C)$ and $\Cov_{\frak{m}}^n(J^1)$ generalize known constructions in the situations of \ref{ex:hyp2d} and \ref{ex:g1nd}. For genus $1$ curves this allows us to relate the existence of $\varphi$-coverings to the period-index problem.

	\subsection{Existence of $\varphi$-coverings}
		The following theorem gives, for a modulus setup associated to an $n$-descent setup, several conditions that are equivalent to the existence of an element in $\Cov_{\frak{m}}^n(C)$. 

		\begin{Theorem}
			\label{thm:phicov}
			Suppose $(n,\frak{m})$ is a modulus setup for $C$ associated to an $n$-descent setup $(n,\Delta,\beta)$ and let $\varphi: A_\frak{m} \to J_\frak{m}$ be the isogeny in~\eqref{eq:definephi}. The following are equivalent.
			\begin{enumerate}
				\item\label{p:1} The class of $J^1_\fm$ in $\HH^1(J_\frak{m})$ is divisible by $\varphi$.
				\item\label{p:2} There exists a $\varphi$-covering of $J^1_\frak{m}$.
				\item\label{p:3} There exists a $\varphi$-covering of $C$.
				\item\label{p:4'} $\Cov^\varphi(C) \ne \emptyset$.
				\item\label{p:3'} $\Cov_{\frak{m}}^n(C) \ne \emptyset$.
				\item\label{p:3''} $\Cov_{\frak{m}}^n(J^1) \ne \emptyset$.
				\item\label{p:4} There exists an $n$-covering $\pi:X \to C$ with the property that $\pi^*\beta_\delta$ is linearly equivalent to a $k$-rational divisor, for some $\delta \in \Delta(\kbar)$.
				\item\label{p:5} The maximal unramified abelian covering of $C_\kbar$ of exponent $n$ descends to $k$ and the image of the $k$-rational divisor class $\pi^*\beta_\delta$ in $\Br(k)$ under the map $\Theta_{X}$ of~\eqref{eq:defThetaC} lies in the image of the map $\Upsilon$ of~\eqref{eq:maindiagram}, for every maximal unramified abelian covering $\pi:X\to C$ of exponent $n$ and every $\delta \in \Delta(\kbar)$.
			\end{enumerate}
		\end{Theorem}
		
		Before giving the proof we state and prove two lemmas.

		\begin{Lemma}\label{lem:phicov}
			Suppose $(n,\frak{m})$ is a modulus setup associated to an $n$-descent setup $(n,\Delta,\beta)$ and that $\pi:X \to C$ is an $n$-covering. The class of $(X,\pi)$ in $\Cov^n(C)$ lies in $\Cov_{\frak{m}}^n(C)$ if and only if $\pi^*\beta_\delta$ is linearly equivalent to a $k$-rational divisor, for some $\delta \in \Delta(\kbar)$.
		\end{Lemma}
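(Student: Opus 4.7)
The plan is to reduce both sides of the equivalence to the vanishing of a single Brauer obstruction $\Theta_X(\bar\beta)$ attached to a canonical class $\bar\beta\in\Pic_X(k)$, and then, for the harder direction, to build an explicit $\varphi$-lift of $X$ by adjoining one more $n$-th root.

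First I would check that $\pi^{\ast}[\beta_\delta]\in\Pic(X_\kbar)$ is independent of $\delta\in\Delta(\kbar)$. For any $\delta,\delta'\in\Delta(\kbar)$, the difference $[\beta_\delta]-[\beta_{\delta'}]$ lies in $J[n]\subset\calJ[n]$, since both classes have degree-image $1\in\Z/n\Z$ in~\eqref{diag:torsion1}; and the pullback $\pi^{\ast}:\Pic(C_\kbar)\to\Pic(X_\kbar)$ annihilates $J[n]$, because $X\to C$ arises as the pullback of $[n]:J\to J$ along an Abel--Jacobi embedding, so pullback of line bundles factors through $[n]^{\ast}$ on $\Pic^0(J)$. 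Galois-equivariance of the set $\Delta(\kbar)$ then shows $\bar\beta:=\pi^{\ast}[\beta_\delta]$ is Galois-fixed, hence lies in $\Pic_X(k)$, with $n\bar\beta=[\pi^{\ast}\frak{m}]$. Since $\pi^{\ast}\frak{m}$ is $k$-rational, $\Theta_X(\bar\beta)\in\Br(k)[n]$, and by~\eqref{eq:defThetaC} the condition that $\pi^{\ast}\beta_\delta$ is linearly equivalent to a $k$-rational divisor is equivalent to $\Theta_X(\bar\beta)=0$.

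For the direction $X\in\Cov_0^n(C)\Rightarrow\Theta_X(\bar\beta)=0$, I would pick a $\varphi$-lift $\rho:Y\to X\to C$. Since the $\varphi$-cover is tamely ramified with index $n$ over $\frak{m}$, one has $\rho^{\ast}\frak{m}=n\frak{n}$ for a $k$-rational divisor $\frak{n}\in\Div(Y)$. Over $\kbar$ the cover supplies an $n$-th root $g_\delta$ of $f_\delta$ with $\divv(g_\delta)=\rho^{\ast}\beta_\delta-\frak{n}$, so the pullback of $\bar\beta$ to $Y$ equals $[\frak{n}]$, which is represented over $k$; functoriality of $\Theta$ under the $k$-morphism $Y\to X$ then yields $\Theta_X(\bar\beta)=0$. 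For the converse, suppose $\pi^{\ast}\beta_\delta\sim D$ with $D\in\Div(X)$ defined over $k$, witnessed by $g\in\kbar(X)^\times$ with $\divv(g)=\pi^{\ast}\beta_\delta-D$. The ratio $G:=g^n/(f_\delta\circ\pi)\in\kbar(X)^\times$ has the $k$-rational divisor $\pi^{\ast}\frak{m}-nD$, so $\sigma\mapsto\sigma G/G\in\kbar^\times$ is a $1$-cocycle; by Hilbert 90, rescaling $g$ by a suitable constant makes $G=h\in k(X)^\times$, yielding $g^n=h\cdot(f_\delta\circ\pi)$. Define $Y$ as the normalization of $X$ in $k(X)(h^{1/n})$. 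Over $\kbar$ we have $h^{1/n}=g/(f_\delta\circ\pi)^{1/n}$, so $Y_\kbar$ contains $(f_\delta)^{1/n}$. Because the image of $[\beta_\delta]$ generates $\calJ[n]/J[n]\cong\Z/n\Z$, adjoining this single extra $n$-th root to the maximal unramified cover $X_\kbar$ already supplies the $n$-th roots $h_D^{1/n}$ for every $[D]\in\calJ[n]$, so $Y_\kbar$ is the full $\varphi$-cover of $C_\kbar$. Hence $Y\to C$ is a $\varphi$-covering of $C$ with $X$ as its maximal unramified sub-covering, so $X\in\Cov_0^n(C)$.

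The main obstacle will be this converse direction: a cohomological vanishing does not obviously produce a $\varphi$-lift, and the key maneuver is using Hilbert 90 to extract the $k$-rational function $h$ from $g^n/(f_\delta\circ\pi)$. Once this is done, the fact that $[\beta_\delta]$ generates $\calJ[n]/J[n]$ is what ensures the single $n$-th root suffices to recover the full $\varphi$-cover, rather than some intermediate extension.
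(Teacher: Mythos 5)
Your proposal is correct, and the harder direction (a $k$-rational divisor in the class of $\pi^*\beta_\delta$ implies liftability) is essentially the paper's argument: both extract, via Hilbert 90, a $k$-rational function $h$ with $\divv(h)=\pm(nD-\pi^*\frak{m})$, adjoin an $n$-th root, and conclude that the result is geometrically the $\varphi$-cover because $[\beta_\delta]$ generates $\calJ[n]/J[n]$ (a point the paper leaves as "the compositum of $\kbar(X_\kbar)$ and $\kbar(C_\kbar)(\sqrt[n]{f_{\frak{m},\delta}})$" and you spell out). Where you genuinely diverge is the forward direction. The paper stays on $X$: the degree-$n$ subextension $k(X)\subset k(Y)$ is generated by an $n$-th root of a $k$-rational $f$ with $\divv(f)=nD-\pi^*(d\frak{m})$, a gcd argument forces $d=1$, and maximality of the unramified subcover forces $f/\pi^*f_{\frak{m},\delta}$ to be an $n$-th power, so $D-\pi^*\beta_\delta$ is principal with $D$ already $k$-rational. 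You instead pass to $Y$, use that the ramification over $\frak{m}$ has index exactly $n$ to write $\rho^*\frak{m}=n\frak{n}$ with $\frak{n}$ $k$-rational, identify the pullback of $\pi^*[\beta_\delta]$ with $[\frak{n}]$ via the $n$-th root of $\rho^*f_{\frak{m},\delta}$ living in $\kbar(Y_\kbar)$, and invoke functoriality of $\Theta$ (so $\Theta_X=\Theta_Y\circ\rho^*$ vanishes). This is valid and has the advantage of reducing everything to one Brauer class $\Theta_X(\pi^*[\beta_\delta])$ (which is also how the paper phrases condition (7) of Proposition~\ref{prop:phicov}); the cost is the ramification claim you assert in passing, which does need the small argument that the inertia group at a point over $\frak{m}$ is a cyclic (tame, since $\operatorname{char} k\nmid n$) subgroup of the exponent-$n$ group $A_\frak{m}[\varphi]$ containing the index-$n$ inertia of the subcover cut out by $\sqrt[n]{f_{\frak{m},\delta}}$, hence has order exactly $n$. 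The paper's route avoids this and, as a by-product, exhibits the $k$-rational divisor $D$ directly rather than only the vanishing of its obstruction class.
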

			
		\begin{proof}
			Suppose $\pi:X \to C$ lifts to a $\varphi$-covering $Y \to C$. The subfield $k(X) \subset k(Y)$ corresponds to the subgroup $\mu_n = T'[\varphi]\subset A_\frak{m}[\varphi]$. The extension $k(X) \subset k(Y)$ is therefore obtained by adjoining to $k(X)$ an $n$-th root of a function $f$ such that $\divv(f) = nD - \pi^*d\frak{m}$, for some $d \in \Z$ and $f \in k(X)^\times$. Furthermore, we can arrange that $d = 1$. Indeed, we must have $\gcd(n,d) = 1$, otherwise there would be a proper unramified intermediate extension of $k(X) \subset k(Y)$. Hence $\pi^*\frak{m} = nD + \divv(f)$ for some $D \in \Div(X)$ and $f \in k(X)^\times$. Recall that $n\beta - \frak{m}\times \Delta = \divv(f_\frak{m})$. So, for any $\delta \in \Delta(\kbar)$, the function $h := f/\pi^*(f_{\frak{m},\delta}) \in \kbar(X_\kbar)^\times$ has divisor $n(D - \pi^*\beta_\delta)$. Since adjoining an $n$th root of $h$ to $\kbar(X_\kbar)$ gives an unramified intermediate field of $\kbar(X_\kbar) \subset \kbar(Y_\kbar)$, we must have $h \in \kbar(X_\kbar)^{\times n}$. This shows that $D - \pi^*\beta_\delta$ is principal.
					
		For the other direction, suppose $D \in \Div(X)$ is a $k$-rational divisor linearly equivalent to $\pi^*\beta_\delta$. Then $\divv(\pi^*f_{\frak{m},\delta}) = n\pi^*\beta_\delta-\pi^*\frak{m} = nD - \pi^*\frak{m} + \divv(f)$, for some $f \in \kbar(X_\kbar)^\times$. Thus, the divisor $nD - \pi^*\frak{m} \in \Div(X)$ is principal and $k$-rational. By Hilbert's Theorem 90 it is the divisor of some $k$-rational function $g \in k(X)^\times$. Let $Y \to X$ be the covering obtained by adjoining an $n$-th root of $g$ to $k(X)$. Over $\kbar$ we see that $\kbar(Y)$ is the compositum of $\kbar(X_\kbar)$ and $\kbar(C_\kbar)(\sqrt[n]{f_{\frak{m},\delta}})$, so $Y \to C$ is a $\varphi$-covering of $C$.
		\end{proof}

	\begin{Lemma}\label{lem:twistcomputation}
				Suppose $\pi:X\to C$ is an $n$-covering and $\pi_z:X_z \to C$ is the twist by the cocycle $z \in Z^1(J[n])$. Let $\Theta_X$ and $\Theta_{X_z}$ denote the maps from~\eqref{eq:defThetaC} and let $\Upsilon$ denote the map in~\eqref{eq:maindiagram}. For any $\delta \in \Delta(\kbar)$, 
				\[
					\Upsilon([z]) = \Theta_{X_z}(\pi_z^*\beta_\delta) - \Theta_X(\pi^*\beta_\delta).
				\]
			\end{Lemma}
		
			\begin{proof}
				There is an isomorphism of coverings $\rho : \Xbar_z \to \Xbar$ with the property that ${ }^{\sigma}\rho\circ\rho^{-1} = T_{z_\sigma} \in \Aut(\Xbar/C_\kbar)$ is translation by $z_\sigma \in J[n]$, for every $\sigma \in \Gal_k$. Let $W = \pi_z^*\beta_\delta$ and $W' := \rho^*(W) = \pi^*\beta_\delta$. These represent Galois invariant divisor classes, hence, for any $\sigma \in \Gal_k$ there are functions $f_\sigma \in \kbar(X_z)^\times$ and $g_\sigma \in \kbar(X)^\times$ with $\divv(f_\sigma) = {}^\sigma W - W$ and $\divv(g_\sigma) = {}^\sigma W' - W'$. The classes in $\Br(k)$ of $W$ and $W'$ are given by the $2$-cocycles
			\[
				a_{(\sigma,\tau)}=\frac{{}^\sigma f_\tau \cdot f_\sigma}{f_{\sigma\tau}}
				\quad\text{and}\quad
				a'_{(\sigma,\tau)}=\frac{{}^\sigma g_\tau \cdot g_\sigma}{g_{\sigma\tau}}\,,
			\]
			both of which take values in $\kbar^\times$. Since $f_\sigma/\rho^*g_\sigma \in \kbar^\times$, the computation
			\[
				\frac{a_{(\sigma,\tau)}}{a'_{(\sigma,\tau)}} 
				= \frac{a_{(\sigma,\tau)}}{\rho^*(a'_{(\sigma,\tau)})}
				= \underbrace{{}^\sigma\left(\frac{f_\tau}{\rho^* g_\tau}\right)
					\cdot \frac{f_\sigma}{\rho^*g_\sigma}
					\cdot \frac{\rho^*g_{\sigma\tau}}{f_{\sigma\tau}} }_{\text{coboundary}}
					\cdot  \frac{{}^\sigma(\rho^*g_\tau)}{\rho^*({}^\sigma g_\tau)}
			\]
			shows that $\Theta_{X_z}(W)-\Theta_{X}(W')$ is represented by the $2$-cocycle $\eta \in Z^2(\Gal_k,\kbar^\times)$ defined by
			\[
				\eta_{(\sigma,\tau)} = \frac{{}^\sigma(\rho^*g_\tau)}{\rho^*({}^\sigma g_\tau)} =  \frac{{}^\sigma g_\tau\circ{}^\sigma\rho}{{}^\sigma g_\tau \circ \rho}\,.
			\]
			Using that $(\rho^{-1})^*$ is the identity on $\kbar \subset \kbar(Y)$ and that ${}^\sigma\rho\circ\rho^{-1} = T_{z_\sigma}$ we have $\eta_{(\sigma,\tau)}= \frac{{}^\sigma g_\tau \circ T_{z_\sigma}}{{}^\sigma g_\tau}\,.$ We recognize this as the Weil pairing $\eta_{(\sigma,\tau)} = e_n({}^\sigma P_\tau,z_\sigma)$, where $P_\tau \in J[n]$ is the class represented by the divisor ${}^\tau\beta_\delta-\beta_\delta$ (see Lemma~\ref{lem:WP}). The cocycle $P_\tau \in Z^1(\Gal_k,J[n])$ represents $\partial(1)$ where $\partial$ is the coboundary map in~\eqref{eq:maindiagram}. So $\eta_{(\sigma,\tau)}$ represents the $e$-pairing cup product $\partial(1) \cup_e [z] = [z] \cup_e \partial(1) = \Upsilon([z])$ by Lemma~\ref{lem:WPcupprod}.
			\end{proof}

			\begin{proof}[Proof of Theorem~\ref{thm:phicov}]
				There exists a $\varphi$-covering of $(J^1_\frak{m})_\kbar$. The Galois descent obstruction to defining this over $k$ is the image in $\HH^2(k,A_\frak{m}[\varphi])$ of the class of this covering under the map
				\[
					\HH^0\left(\Gal_k,\HH^1\left((J^1_\frak{m})_\kbar,A_\frak{m}[\varphi]\right)\right) \to \HH^2(\Gal_k,A_\frak{m}[\varphi])
				\]
				from the Hochschild-Serre spectral sequence (cf. \cite[Section 2.2]{Skorobogatov}). This class coincides with the image of $[J^1_\frak{m}]$ under the coboundary map arising from the exact sequence
				\[
					0 \to A_\frak{m}[\varphi] \to A_\frak{m} \to J_\frak{m} \to 0
				\]
				(see \cite[Lemma 2.4.5]{Skorobogatov}). This proves the equivalence of \reff{p:1} and \reff{p:2}, while the equivalence of \reff{p:2} and \reff{p:3} follows from geometric class field theory. The equivalences \reff{p:3} $\Leftrightarrow$ \reff{p:4'} $\Leftrightarrow$ \reff{p:3'} $\Leftrightarrow$ \reff{p:3''} follow immediately from the definitions, and \reff{p:3} $\Leftrightarrow$ \reff{p:4} is given by Lemma~\ref{lem:phicov}.

				It remains to prove \reff{p:4} $\Leftrightarrow$ \reff{p:5}. An $n$-covering $\pi:X \to C$ is a $k$-form of the maximal unramified abelian covering of exponent $n$, which we may assume exists. Then, for any $\delta,\delta' \in \Delta(\kbar)$ the divisors $\pi^*\beta_\delta$ and $\pi^*\beta_{\delta'}$ are linearly equivalent. Indeed $\beta_\delta-\beta_{\delta'}$ represents a class in $J[n]$. It follows that the class of $\pi^*\beta_\delta$ in $\Pic(X_\kbar)$ is fixed by $\Gal_k$. The image of this class in $\Br(k)$ is trivial if and only if the class can be represented by a $k$-rational divisor. Since the set of all isomorphism classes of $n$-coverings of $C$ is a principal homogeneous space for $\HH^1(J[n])$ under the action of twisting, the equivalence of \reff{p:4} and  \reff{p:5} follows from Lemma~\ref{lem:twistcomputation}.
			\end{proof}

		\subsection{Hyperelliptic curves}

			Suppose $(2,\frak{m})$ is a modulus setup for $C : z^2 = f(x,y)$, a double cover of $\PP^1$ as in \ref{ex:hyp2d}. 
			\begin{enumerate}
				\item Given a pair of symmetric bilinear forms $(A,B)$ such that $\disc(Ax-By) = f(x,y)$ the Fano variety of maximal linear subspaces contained in the base locus of the pencil of quadrics generated by $(A,B)$ may be given the structure of a $2$-covering of $J^1$. Theorem 22 and the discussion of Section 5 in \cite{BGW} shows that the isomorphism classes of $2$-coverings of $J^1$ that arise in this way are precisely those in $\Cov_{\frak{m}}^2(J^1)$. 
				\item Section 3 of \cite{BruinStoll} gives an explicit construction of a collection of $2$-coverings of $C$ from the set $H_k$ (notation as in \cite{BruinStoll}). Comparing Lemma~\ref{lem:phicov} with the proof of \cite[Theorem 3.4]{BruinStoll} shows that the collection of coverings they produce is precisely $\Cov_{\frak{m}}^2(C)$.
				\item In \cite[Section 6]{CreutzANTSX} a set $\Cov_{\textup{good}}(J^1/k)$ is defined; from that definition and point (2) above it follows that this set coincides with $\Cov_{\frak{m}}^2(J^1)$. See also~\cite[Lemma 2.3]{CreutzIJNT} for a direct proof that $\Cov_{\textup{good}}(J^1/k)$ coincides with the set described in (1) above.
			\end{enumerate}

\subsection{Genus $1$ curves}\label{sec:g1}
		 	For a genus $1$ curve $C$ there is a natural identification $C = J^1$, and $C$ can be endowed with the structure of a torsor under its Jacobian $J$. We define the \defi{index} of $C$ to be the least positive degree of a $k$-rational divisor on $C$ and the \defi{period} of $C$ to be the order of the class $[C]$ in $\HH^1(E)$. The index $I$ and period $P$ of $C$ are known to satisfy $P \mid I \mid P^2$, and over number fields all pairs of integers $(P,I)$ satisfying these relations are known to occur \cite{CSpotentialSha}. The following result gives an interpretation of the equivalent conditions of Theorem~\ref{thm:phicov} in terms of period and index of the $n$-coverings of $C$.
		 	
		 	The proof of the following theorem is given at the end of this section.
		 		
		 		\begin{Theorem}\label{thm:PIg1}
		 			Let $[C]$ be a torsor under an elliptic curve $E$ with underlying curve $C$. The following are equivalent.
		 			\begin{enumerate}
		 				\item\label{q1} There exists a torsor $[C'] \in \HH^1(E)$ of index dividing $n^2$ such that $n[C']=[C]$.
		 				\item\label{q2} The curve $C$ admits a modulus setup $(n,\frak{m})$ with $n = \deg(\frak{m})$ such that $[J^1_\frak{m}]$ is divisible by $\varphi$ in $\HH^1(J_\frak{m})$.
		 			\end{enumerate}
		 		\end{Theorem}
		 		 
				\begin{Remark}
		 			In \cite{CreutzPAMS} it is shown that condition \reff{q2} is satisfied when $C$ is a locally soluble curve over a global field $k$ and the action of $\Gal_k$ on $J[n]$ is sufficiently generic. In particular, when $k = \Q$, it holds when $n = p^r$ is any prime power with $p > 7$.
		 		\end{Remark}

				From the proof one extracts the following, which shows that the set $\Cov_{\frak{m}}^n(C)$ of this paper coincides with the set $\Cov_0^n(C)$ defined in~\cite[Definition 3.3]{CreutzMathComp}.
	
				\begin{Corollary}
					Let $C$ be a genus $1$ curve with a modulus setup $(n,\frak{m})$ with $n = \deg(\frak{m})$. The set $\Cov_{\frak{m}}^n(C)$ consists of those $n$-coverings $D \to C$ such that the index of $D$ divides $n^2$.
				\end{Corollary}	
		 		
		 		Our proof of Theorem~\ref{thm:PIg1} will make use of the following interpretation of the elements of $\HH^1(E[n])$ taken from \cite{CFOSS}.
		 		
		 		\begin{Definition}
		 		  A \defi{torsor divisor class pair} $(T,Z)$ consists of a $E$-torsor $T$ and a $k$-rational divisor class $Z\in \Pic_T(k)$. Two torsor divisor class pairs $(T,Z)$ and $(T',Z')$ are isomorphic if there is an isomorphism of torsors $s:T \to T'$ such that $s^*Z' = Z$. 
				\end{Definition}
				
				The automorphism group of the pair $(E,n.0_E)$ can be identified with $E[n]$, and every pair $(T,Z)$ with $\deg(Z) = n$ can be viewed as a twist of $(E,n.0_E)$ (\cite[Lemmas 1.7 and 1.8]{CFOSS}). It follows that the torsor divisor class pairs of degree $n$, viewed as twists of $(E,n.0_E)$, are parameterized by the group $\HH^1(E[n])$.
				
				\begin{Lemma}\label{lem:oblemma}
					Suppose $(T',Z')$ is a torsor divisor class pair representing a lift of the class of $(T,Z)$ under the map $n_*:\HH^1(E[n^2]) \to \HH^1(E[n])$. The Brauer classes associated to the $k$-rational divisor classes $Z'$ and $Z$ satisfy $n[Z'] = [Z]$ in $\Br(k)$. In particular, $Z$ is represented by a $k$-rational divisor if $Z'$ is.
				\end{Lemma}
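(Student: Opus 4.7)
The plan is to construct a $k$-rational morphism $\pi : T' \to T$ that lifts the multiplication-by-$n$ map on $E$ across the twisted trivializations of $T'$ and $T$ over $\kbar$, to establish the equality $\pi^*Z = n Z'$ in $\Pic_{T'}(k)$, and then to conclude by Galois-equivariant functoriality of the Brauer obstruction $\Theta$ with respect to $\pi$.

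Using the identification recalled just before the statement of torsor-divisor-class pairs of degree $d$ with twists of $(E, d\cdot 0_E)$, I fix $\kbar$-isomorphisms $T_\kbar \simeq E_\kbar$ and $T'_\kbar \simeq E_\kbar$ such that the Galois actions are translation by cocycle values $\xi_\sigma \in E[n]$ and $\xi'_\sigma \in E[n^2]$ satisfying $n\xi'_\sigma = \xi_\sigma$; this last relation is exactly the hypothesis that $(T',Z')$ lifts $(T,Z)$ along $n_* : \HH^1(k,E[n^2]) \to \HH^1(k,E[n])$. Because $[n](x + \xi'_\sigma) = [n](x) + n\xi'_\sigma = [n](x) + \xi_\sigma$, the morphism $[n] : E_\kbar \to E_\kbar$ is equivariant for the two twisted Galois actions, so it descends to a $k$-morphism $\pi : T' \to T$.

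Under these trivializations $Z$ is represented by the divisor $n\cdot 0_E$ on $T_\kbar$ and $Z'$ by $n^2\cdot 0_E$ on $T'_\kbar$, so $\pi^*Z = [n]^*(n\cdot 0_E) = n\cdot E[n]$ as divisors on $T'_\kbar$ (using that $[n]$ is \'etale, as $n$ is invertible in $k$). Since $\sum_{P\in E[n]} P = 0_E$, Abel--Jacobi gives $E[n] \sim n^2\cdot 0_E$ in $\Pic(E_\kbar)$, hence $\pi^*Z \sim n(n^2 \cdot 0_E) = nZ'$ in $\Pic(T'_\kbar)$. Both sides are $k$-rational divisor classes, so this equality already holds in $\Pic_{T'}(k)$. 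Because $\pi$ is defined over $k$, the $\kbar^\times$-valued $2$-cocycle $\sigma f_\tau \cdot f_\sigma / f_{\sigma\tau}$ computing $\Theta_T(Z)$ is identical to the one computing $\Theta_{T'}(\pi^*Z)$; combining this with linearity of $\Theta_{T'}$ we obtain
\[
	[Z] = \Theta_T(Z) = \Theta_{T'}(\pi^*Z) = \Theta_{T'}(nZ') = n\,\Theta_{T'}(Z') = n[Z'] \quad \text{in } \Br(k).
\]
The ``in particular'' statement is immediate from the exact sequence~\eqref{eq:defThetaC}: if $Z'$ is represented by a $k$-rational divisor then $[Z'] = 0$, so $[Z]=0$, and $Z$ is represented by a $k$-rational divisor as well.

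The main point requiring care is the construction of $\pi$ via the twisted trivializations and the bookkeeping showing that $[n]$ descends to $k$; the ingredients $\sum_{P\in E[n]} P = 0_E$ and $k$-functoriality of $\Theta$ are standard.
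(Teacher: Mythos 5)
Your proof is correct, but it takes a genuinely different route from the paper's. The paper argues at the level of cocycles and theta groups: it picks functions $f_\sigma,g_\sigma\in\kbar(E)^\times$ with $\divv(f_\sigma)=\tau_{\xi_\sigma}^*[n]^*0_E-[n]^*0_E$ and $\divv(g_\sigma)=\tau_{n\xi_\sigma}^*n.0_E-n.0_E$, normalizes them so that $f_\sigma^n=g_\sigma\circ[n]$, and then identifies the resulting $2$-cocycles $F,G$ (satisfying $F^n=G$) with the Brauer classes of $Z'$ and $Z$ by invoking the theta-group results \cite[Prop.\ 1.32 and Prop.\ 2.2]{CFOSS}. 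You instead descend $[n]$ through the compatible twists to obtain a $k$-morphism $\pi:T'\to T$, show $\pi^*Z=nZ'$ in $\Pic(T'_\kbar)$ using $[n]^*0_E\sim n^2\cdot 0_E$ (the $n$-torsion points sum to $0_E$, and $[n]$ is \'etale since the characteristic does not divide $n$), and finish by functoriality and linearity of the map $\Theta$ of~\eqref{eq:defThetaC}. This avoids theta groups entirely and is more geometric and self-contained, whereas the paper's cocycle formulation connects directly to the explicit-descent machinery of \cite{CFOSS} used elsewhere in that section. Two points you compress are fine but worth making explicit: since $n_*[\xi']=[\xi]$ only up to coboundary, you must (and implicitly do) adjust the trivialization of $(T,Z)$ so that its cocycle is literally $n\xi'_\sigma$; and the identity $\Theta_{T'}(\pi^*Z)=\Theta_T(Z)$ holds because the functions witnessing Galois invariance of $\pi^*Z$ can be taken to be $\pi$-pullbacks, so the resulting constant $2$-cocycle is unchanged.
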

				
				\begin{proof}
					Suppose the class of $(T',Z')$ is represented by a $1$-cocycle $\xi_\sigma \in Z^1(E[n^2])$.  Let $f_\sigma, g_\sigma \in \kbar(E)^\times$ be functions such that $\divv(f_\sigma) = \tau_{\xi_\sigma}^*[n]^*0_E - [n]^*0_E$ and $\divv(g_\sigma) = \tau_{n\xi_\sigma}^*n.0_E - n.0_E$. Comparing divisors we see that we may scale by a constant to arrange that $f_\sigma^n = g_\sigma \circ [n]$. Moreover, using that $\xi_\sigma$ is a cocycle, we see that the coboundaries of the $1$-cochains $(\sigma \mapsto f_\sigma)$ and $(\sigma \mapsto g_\sigma)$ give $2$-cocycles $F,G \in Z^2(\kbar^\times)$ satisfying $F^n = G$.
					
					To prove the lemma one shows that $F$ and $G$ represent the Brauer classes corresponding to $Z'$ and $Z$, respectively. By \cite[Prop. 1.32]{CFOSS}, the pair $(g_\sigma,n\xi_\sigma)$ denotes a lift of $n\xi_\sigma$ to the theta group corresponding to the torsor divisor class pair $(E,n.0_E)$. Then \cite[Prop. 2.2]{CFOSS} shows that $[G] = [Z]$. In the same way we see that $(f_\sigma,\xi_\sigma)$ gives a lift of $\xi_\sigma$ to the theta group corresponding to $(E,[n]^*0_E) \simeq (E,n^2.0_E)$ and so $[F] = [Z']$.
				\end{proof}

		 		\begin{proof}[Proof of Theorem~\ref{thm:PIg1}]
		 		
		 			We may assume $n > 1$. 
		 			
		 			\reff{q1} $\Rightarrow$ \reff{q2}. Suppose \reff{q1} holds and let $Z' \in \Pic^{n^2}(C')$. Consider the torsor divisor class pair $([C'],Z')$. The image of this class under $n_*:\HH^1(E[n^2]) \to \HH^1(E[n])$ is represented by a pair $([C],Z)$. By Lemma~\ref{lem:oblemma}, $Z \in \Pic^n(C)$. By Riemann-Roch $Z$ determines a map $C \to \PP^{n-1}$ (which is an embedding for $n > 2$ and a double cover for $n = 2$). By Bertini the divisor class $Z$ contains a reduced and effective and base point free divisor $\frak{m}$ of degree $n$. Then $(n,\frak{m})$ is a modulus setup for $C$ with $n = \deg(\frak{m})$. Let $\Delta := \{ x \in C(\kbar) \;:\; n.x \sim \frak{m} \}$ and take $\beta$ to be the diagonal embedding of $\Delta$ in $C \times \Delta$. Then $(n,\frak{m})$ is associated to the $n$-descent setup $(n,\Delta,\beta)$, which agrees with that described in \ref{ex:g1nd}.

		 			The pair $(C',Z')$ corresponds to an $n^2$-covering of $E$, which we may assume factors through the $n$-covering of $E$ determined by $(C,Z)$. In particular, there is a commutative diagram
		 			\[
		 				\xymatrix{
		 					C' \ar[r]^{\pi'}\ar@{.>}[d]^{s'} & C \ar[r]^{\pi} \ar@{.>}[d]^s & E \ar@{=}[d]\\
		 					E \ar[r]^n & E\ar[r]^n&E
		 				}
		 			\]
		 			where $s$ and $s'$ are isomorphisms defined over $\kbar$ which determine the $E$-torsor structures on $C$ and $C'$. Now $[\frak{m}] = Z = [s^*n.0_E]$, so we must have $s^*0_E = \beta_\delta$ for some $\delta \in \Delta(\kbar)$. On the other hand, $Z'$ is the class of $s'^*n^2.0_E = s'^*[n]^*0_E = \pi'^*s^*0_E = \pi'^*\beta_\delta$. As this class is represented by a $k$-rational divisor, Theorem~\ref{thm:phicov} shows that $[J^1_\frak{m}]$ is divisible by $\varphi$.

		 			\reff{q2} $\Rightarrow$ \reff{q1}. Then $\frak{m}$ is ample and base point free and, hence, determines a model of $C$ as a degree $n$ curve in $\PP^{n-1}$. Let $(n,\Delta,\beta)$ be the $n$-descent setup as in \ref{ex:g1nd}. By Theorem~\ref{thm:phicov} there is an $n$-covering $\pi:C' \to C$ such that $\pi^*\beta_\delta$ is linearly equivalent to a $k$-rational divisor for some $\delta \in \Delta(\kbar)$. The genus $1$ curve $C'$ is  endowed with a torsor structure so that $n[C'] = [C]$ in $\HH^1(E)$. Moreover, the index of $[C']$ divides $\deg(\pi^*\beta_\delta) = n^2$. 			
		 		\end{proof}


	\begin{bibdiv}
		\begin{biblist}

\bib{Atiyah}{article}{
   author={Atiyah, Michael F.},
   title={Riemann surfaces and spin structures},
   journal={Ann. Sci. \'Ecole Norm. Sup. (4)},
   volume={4},
   date={1971},
   pages={47--62},
   issn={0012-9593},
}

\bib{Bhargava}{article}
  {
   author={Bhargava, Manjul},
   title={Most hyperelliptic curves over $\Bbb Q$ have no rational points},
   eprint={arXiv:1308.0395}
   }

\bib{BG}{article}{
   author={Bhargava, Manjul},
   author={Gross, Benedict H.},
   title={The average size of the 2-Selmer group of Jacobians of
   hyperelliptic curves having a rational Weierstrass point},
   conference={
      title={Automorphic representations and $L$-functions},
   },
   book={
      series={Tata Inst. Fundam. Res. Stud. Math.},
      volume={22},
      publisher={Tata Inst. Fund. Res., Mumbai},
   },
   date={2013},
   pages={23--91},
}

\bib{BGW}{article}{
   author={Bhargava, Manjul},
   author={Gross, Benedict H.},
   author={Wang, Xiaoheng},
   title={A positive proportion of locally soluble hyperelliptic curves over
   $\Bbb Q$ have no point over any odd degree extension},
   note={With an appendix by Tim Dokchitser and Vladimir Dokchitser},
   journal={J. Amer. Math. Soc.},
   volume={30},
   date={2017},
   number={2},
   pages={451--493},
   issn={0894-0347},
}

\bib{BGW_AIT2}{article}{
   author={Bhargava, Manjul},
   author={Gross, Benedict H.},
   author={Wang, Xiaoheng},
   title={Arithmetic invariant theory II: Pure inner forms and obstructions
   to the existence of orbits},
   conference={
      title={Representations of reductive groups},
   },
   book={
      series={Progr. Math.},
      volume={312},
      publisher={Birkh\"auser/Springer, Cham},
   },
   date={2015},
   pages={139--171},
}
		 
%
%

\bib{BSD}{article}{
   author={Birch, B. J.},
   author={Swinnerton-Dyer, H. P. F.},
   title={Notes on elliptic curves. I},
   journal={J. Reine Angew. Math.},
   volume={212},
   date={1963},
   pages={7--25},
   issn={0075-4102},
}

\bib{Borovoi}{article}{
   author={Borovoi, Mikhail},
   title={The Brauer-Manin obstructions for homogeneous spaces with
   connected or abelian stabilizer},
   journal={J. Reine Angew. Math.},
   volume={473},
   date={1996},
   pages={181--194},
   issn={0075-4102},
}

\bib{magma}{article}{
  author={Bosma, W.},
  author={Cannon, J.},
  author={Playoust, C.},
  title={The Magma algebra system. I. The user language}, 
  journal={J. Symbolic Comput.},
  volume={24},
  date={1997},
  pages={235--265}
  }

\bib{Bremner}{article}{
   author={Bremner, Andrew},
   title={Some quartic curves with no points in any cubic field},
   journal={Proc. London Math. Soc. (3)},
   volume={52},
   date={1986},
   number={2},
   pages={193--214},
   issn={0024-6115},
}

\bib{BPS}{article}{
   author={Bruin, Nils},
   author={Poonen, Bjorn},
   author={Stoll, Michael},
   title={Generalized explicit descent and its application to curves of genus $3$},
   journal={Forum of Mathematics, Sigma},
   date={2016},
   volume={4},
   issn = {2050-5094},
   pages={e6 80 pages},
}

\bib{BruinStoll}{article}{
   author={Bruin, Nils},
   author={Stoll, Michael},
   title={Two-cover descent on hyperelliptic curves},
   journal={Math. Comp.},
   volume={78},
   date={2009},
   number={268},
   pages={2347--2370},
   issn={0025-5718},
}

\bib{Cassels}{article}{
   author={Cassels, J. W. S.},
   title={Arithmetic on curves of genus $1$. IV. Proof of the
   Hauptvermutung},
   journal={J. Reine Angew. Math.},
   volume={211},
   date={1962},
   pages={95--112},
   issn={0075-4102},
}

\bib{CSpotentialSha}{article}{
   author={Clark, Pete L.},
   author={Sharif, Shahed},
   title={Period, index and potential. III},
   journal={Algebra Number Theory},
   volume={4},
   date={2010},
   number={2},
   pages={151--174},
   issn={1937-0652},
}

\bib{Cremona}{article}{
   author={Cremona, J. E.},
   title={Classical invariants and 2-descent on elliptic curves},
   note={Computational algebra and number theory (Milwaukee, WI, 1996)},
   journal={J. Symbolic Comput.},
   volume={31},
   date={2001},
   number={1-2},
   pages={71--87},
   issn={0747-7171},
}

\bib{CFOSS}{article}{
   author={Cremona, J. E.},
   author={Fisher, T. A.},
   author={O'Neil, C.},
   author={Simon, D.},
   author={Stoll, M.},
   title={Explicit $n$-descent on elliptic curves. I. Algebra},
   journal={J. Reine Angew. Math.},
   volume={615},
   date={2008},
   pages={121--155},
   issn={0075-4102},
}

\bib{CreutzPhD}{article}{
   author={Creutz, Brendan},
   title={Explicit second p-descent on elliptic curves},
   note={Ph.D. thesis, Jacobs University},
   date={2010}
}

%
		
\bib{CreutzANTSX}{article}{
   author={Creutz, Brendan},
   title={Explicit descent in the Picard group of a cyclic cover of the projective line},
   book={
     title={Algorithmic number theory: Proceedings of the 10th Biennial International Symposium (ANTS-X) held in San Diego, July 9--13, 2012},
     series={Open Book Series},
     volume={1},
     publisher={Mathematical Science Publishers},
     editor={Everett W. Howe},	
     editor={Kiran S. Kedlaya}			    
   },
  date={2013},
  pages={295--315}
}

\bib{CreutzMathComp}{article}{
   author={Creutz, Brendan},
   title={Second $p$-descents on elliptic curves},
   journal={Math. Comp.},
   volume={83},
   date={2014},
   number={285},
   pages={365--409},
   issn={0025-5718},
}

\bib{CreutzPAMS}{article}{
   author={Creutz, Brendan},
   title={Most binary forms come from a pencil of quadrics},
   journal={Proc. Amer. Math. Soc. Ser. B},
   volume={3},
   date={2016},
   pages={18--27},
   issn={2330-1511},
}

\bib{CreutzIJNT}{article}{
	author={Creutz, Brendan},
	title={Improved rank bounds from $2$-descent on hyperelliptic Jacobians},
	journal={Int. J. Number Theory},
	volume={14},
	number={(6)},
	pages={1709--1713},
	date={2018}
	}
	
\bib{CreutzViray}{article}{
   author={Creutz, Brendan},
   author={Viray, Bianca},
   title={Two torsion in the Brauer group of a hyperelliptic curve},
   journal={Manuscripta Math.},
   volume={147},
   date={2015},
   number={1-2},
   pages={139--167},
   issn={0025-2611},
}

\bib{Grothendieck}{article}{
   author={Grothendieck, Alexander},
   title={Techniques de construction et th\'eor\`emes d'existence en g\'eom\'etrie
   alg\'ebrique. III. Pr\'eschemas quotients},
   language={French},
   conference={
      title={S\'eminaire Bourbaki, Vol. 6},
   },
   book={
      publisher={Soc. Math. France, Paris},
   },
   date={1995},
   pages={Exp. No. 212, 99--118},
}

\bib{Howe}{article}{
   author={Howe, Everett W.},
   title={The Weil pairing and the Hilbert symbol},
   journal={Math. Ann.},
   volume={305},
   date={1996},
   number={2},
   pages={387--392},
   issn={0025-5831},
}

\bib{Lichtenbaum}{article}{
   author={Lichtenbaum, Stephen},
   title={Duality theorems for curves over $p$-adic fields},
   journal={Invent. Math.},
   volume={7},
   date={1969},
   pages={120--136},
   issn={0020-9910},
}

\bib{MSS}{article}{
   author={Merriman, J. R.},
   author={Siksek, S.},
   author={Smart, N. P.},
   title={Explicit $4$-descents on an elliptic curve},
   journal={Acta Arith.},
   volume={77},
   date={1996},
   number={4},
   pages={385--404},
   issn={0065-1036},
}

\bib{Milne_jac}{article}{
   author={Milne, J. S.},
   title={Jacobian varieties},
   conference={
      title={Arithmetic geometry},
      address={Storrs, Conn.},
      date={1984},
   },
   book={
      publisher={Springer, New York},
   },
   date={1986},
   pages={167--212},
}

\bib{Milne_AV}{article}{
   author={Milne, J. S.},
   title={Abelian varieties},
   conference={
      title={Arithmetic geometry},
      address={Storrs, Conn.},
      date={1984},
   },
   book={
      publisher={Springer, New York},
   },
   date={1986},
   pages={103--150},
}
	
%

\bib{MumfordAV}{book}{
   author={Mumford, David},
   title={Abelian varieties},
   series={Tata Institute of Fundamental Research Studies in Mathematics,
   No. 5 },
   publisher={Published for the Tata Institute of Fundamental Research,
   Bombay; Oxford University Press, London},
   date={1970},
   pages={viii+242},
}

\bib{nsw}{book}{
   author={Neukirch, J{\"u}rgen},
   author={Schmidt, Alexander},
   author={Wingberg, Kay},
   title={Cohomology of number fields},
   series={Grundlehren der Mathematischen Wissenschaften [Fundamental
   Principles of Mathematical Sciences]},
   volume={323},
   edition={2},
   publisher={Springer-Verlag, Berlin},
   date={2008},
   pages={xvi+825},
   isbn={978-3-540-37888-4},
}

\bib{PoonenRains}{article}{
   author={Poonen, Bjorn},
   author={Rains, Eric},
   title={Self cup products and the theta characteristic torsor},
   journal={Math. Res. Lett.},
   volume={18},
   date={2011},
   number={6},
   pages={1305--1318},
   issn={1073-2780},
}

\bib{PoonenSchaefer}{article}{
   author={Poonen, Bjorn},
   author={Schaefer, Edward F.},
   title={Explicit descent for Jacobians of cyclic covers of the projective
   line},
   journal={J. Reine Angew. Math.},
   volume={488},
   date={1997},
   pages={141--188},
   issn={0075-4102},
}

\bib{PoonenStoll}{article}{
   author={Poonen, Bjorn},
   author={Stoll, Michael},
   title={The Cassels-Tate pairing on polarized abelian varieties},
   journal={Ann. of Math. (2)},
   volume={150},
   date={1999},
   number={3},
   pages={1109--1149},
   issn={0003-486X},
}
%
%


\bib{SerreAGCF}{book}{
   author={Serre, Jean-Pierre},
   title={Algebraic groups and class fields},
   series={Graduate Texts in Mathematics},
   volume={117},
   note={Translated from the French},
   publisher={Springer-Verlag, New York},
   date={1988},
   pages={x+207},
   isbn={0-387-96648-X},
}


\bib{Skorobogatov}{book}{
   author={Skorobogatov, Alexei},
   title={Torsors and rational points},
   series={Cambridge Tracts in Mathematics},
   volume={144},
   publisher={Cambridge University Press, Cambridge},
   date={2001},
   pages={viii+187},
   isbn={0-521-80237-7},
}

\bib{Sutherland}{article}{
  author={Sutherland, Andrew},
  title={A database of nonhyperelliptic genus 3 curves over $\Q$},
  eprint={arXiv:1806.06289},
  date={2018}
}


\bib{Thorne1}{article}{
   author={Thorne, Jack A.},
   title={$E_6$ and the arithmetic of a family of nonhyperelliptic
   curves of genus 3},
   journal={Forum Math. Pi},
   volume={3},
   date={2015},
   pages={e1, 41},
   issn={2050-5086},
}

\bib{Thorne2}{article}{
   author={Thorne, Jack A.},
   title={On the 2-Selmer groups of plane quartic curves with a marked point},
   note={(preprint)}
}

\bib{Wang}{article}{
   author={Wang, Xiaoheng},
   title={Maximal linear spaces contained in the based loci of pencils of
   quadrics},
   journal={Algebr. Geom.},
   volume={5},
   date={2018},
   number={3},
   pages={359--397},
   issn={2214-2584},
}


			\end{biblist}
	\end{bibdiv}

\end{document}